\theoremstyle{plain}
\newtheorem{theorem}{Theorem}
\newtheorem{corollary}[theorem]{Corollary}
\newtheorem{lemma}[theorem]{Lemma}
\newtheorem{proposition}[theorem]{Proposition}
\theoremstyle{definition}
\newtheorem{definition}[theorem]{Definition}
\newtheorem{example}[theorem]{Example}
 \newtheorem{algorithm}[theorem]{Algorithm}
\newtheorem{problem}{Problem}
\theoremstyle{remark}
\def\F{F}
\def\G{G}
\def\B{\mathcal{C}}
\def\b{\beta}
\def\BS{\mathcal{B}}
\def\D{\mathcal{D}}
\def\t{\mathfrak{i}}
\def\C{\mathcal{A}}
\def\S{\mathfrak{S}}
\def\ST{R}
\def\TB{T}
\def\STT{R}
\def\Pn{P(S;n)}
\def\d{\delta}
\def\a{\alpha}
\newcommand{\ATK}{{P(S;n)^{\nearrow k}}}
\newcommand{\DTK}{{P(S;n)_{\searrow k}}}
\newcommand{\cielo}{{\overline{P(S;n)}}}
\newcommand{\infierno}{{\underline{P(S;n)}}}
\newcommand{\cielovacio}{{\overline{P(\emptyset;n)}}}
\newcommand{\infiernovacio}{{\underline{P(\emptyset;n)}}}
\newcommand{\PiAK}{{\Pi^{\nearrow k}}}
\newcommand{\PiA}{{\Pi^{\nearrow}}}
\newcommand{\PiDK}{{\Pi_{\searrow k}}}
\title{Peak Sets of Classical Coxeter Groups}
 \author[1]{Alexander Diaz-Lopez\thanks{\textcolor{blue}{\href{mailto:adiaz4@nd.edu}{adiaz4@nd.edu}}}}
 \author[2]{Pamela E. Harris\thanks{\textcolor{blue}{\href{mailto:Pamela.Harris@usma.edu}{Pamela.Harris@usma.edu}}. This research
was performed while the author held a National Research Council Research Associateship Award at USMA/ARL.}}
 \author[3]{Erik Insko\thanks{\textcolor{blue}{\href{mailto:einsko@fgcu.edu}{einsko@fgcu.edu}}}}
 \author[4]{Darleen Perez-Lavin \thanks{\textcolor{blue}{\href{mailto:darleenpl@uky.edu}{darleenpl@uky.edu}}}}
 \affil[1]{Department of Mathematics, University of Notre Dame}
 \affil[2]{Department of Mathematical Sciences, United States Military Academy}
 \affil[3]{Department of Mathematics, Florida Gulf Coast University}
 \affil[4]{Department of Mathematics, University of Kentucky}
\date{ }
\begin{document}
%
%
%
%
%
%
%
%
%
%
%
%
%
%


\maketitle
 \begin{abstract} \noindent
We say a permutation $\pi=\pi_1\pi_2\cdots\pi_n$ in the symmetric group $\S_n$ has a \emph{peak} at index $i$ if
$\pi_{i-1}<\pi_i>\pi_{i+1}$ and we let
$P(\pi)=\{i \in \{1, 2, \ldots, n\}  \, \vert \, \mbox{$i$ is a peak of $\pi$}\}$. Given a set $S$ of positive integers, we let
$P (S; n)$ denote the subset of $\S_n$ consisting of all permutations $\pi$, where $P(\pi) =S$.
In 2013, Billey, Burdzy, and Sagan proved $|P(S;n)| = p(n)2^{n-\lvert S\rvert-1}$, where $p(n)$ is a polynomial of degree
$\max(S)
- 1$.
In 2014, Castro-Velez et al. considered the Coxeter group of type $B_n$ as the group of signed permutations on $n$ letters and
showed that $\lvert P_B(S;n)\rvert=p(n)2^{2n-|S|-1}$ where $p(n)$ is the same polynomial of degree $\max(S)-1$. In this paper we
partition the sets $P(S;n) \subset \S_n$
studied by Billey, Burdzy, and Sagan into subsets of $P(S;n)$ of permutations with peak set $S$ that end with an ascent to a
fixed
integer $k$ or a descent
and provide polynomial formulas for the cardinalities of these subsets.
After embedding the Coxeter groups of Lie type $C_n$ and $D_n$ into $\S_{2n}$, we partition these groups into bundles of
permutations
$\pi_1\pi_2 \cdots\pi_n|\pi_{n+1}\cdots \pi_{2n}$ such that $\pi_1\pi_2\cdots \pi_n$ has the same relative order as some
permutation $\sigma_1\sigma_2\cdots\sigma_n \in \S_n$.
This allows us to count the number of permutations
in types $C_n$ and $D_n$ with a given peak set $S$ by reducing the enumeration to calculations in the symmetric group and sums
across the rows of Pascal's triangle.
\end{abstract}

\noindent
\emph{Keywords:} binomial coefficient, peak, permutation, signed permutation, permutation pattern.\\
\noindent
\emph{2000 MSC:} 05A05,  05A10, 05A15.

\section{Introduction}

We say a permutation $\pi=\pi_1\pi_2\ldots \pi_n$ in the symmetric group $\S_n$ has a \emph{peak} at index $i$ if
$\pi_{i-1}<\pi_i>\pi_{i+1}$.  We let $[n]:=\{1,2,\ldots, n\}$
and define the peak set of a permutation $\pi$ to be the set of peaks in $\pi$: \[ P(\pi) = \{i \in [n]\, \vert \, \mbox{ $i$ is
a
peak of $\pi$}\}. \]
Given a subset $S \subset [n],$
 we denote
 the set of all permutations with peak set $S$ by
\[P(S;n) = \{ \pi \in \S_n \,  \vert \,  P(\pi) = S\}.\]
We say a set $S \subset [n]$ is \emph{$n$-admissible} (or simply \emph{admissible} when $n$ is understood) provided $P(S;n)\neq
\emptyset$.

While the combinatorics of Coxeter groups has fascinated mathematicians for generations \cite{BB00},
the combinatorics of peaks has only recently caught the eye of the mathematical community.
Stembridge was one of the first to study the combinatorics of peaks. In 1997, he defined a peak analog of Stanley's theory of
poset partitions \cite{S97}.
In 2003, Nyman showed that taking formal sums of permutations according to their peak sets
gives a non-unital subalgebra of the group algebra of the symmetric group \cite{N03}.
This motivated several papers studying peak (and descent) algebras of classical Coxeter groups
\cite{ABN04, ANO06,BH06,P07}.
Peaks have also been linked to the Schubert calculus of isotropic flag manifolds~\cite{BMSV02, BS02,BH95} and the generalized
Dehn-Sommerville equations~\cite{ABS06,BMSV00, BHV03}.

In 2013, Billey, Burdzy, and Sagan counted the number of elements in the sets $P(S;n)$.
For any $n$-admissible set $S$, they found these cardinalities satisfy
\begin{align}
| P(S;n) |&= p(n) 2^{n-| S|-1}\label{polyeq}
\end{align}
where $|S|$ denotes the cardinality of the set $S$, and where the
\emph{peak polynomial} $p(n)$
is a polynomial of degree $\max(S)-1$ that takes integral values when evaluated at integers \cite[Theorem
1.1]{BBS13}.
Their study was motivated by a problem in probability theory which explored the mass distribution on graphs as it relates to
random
permutations with specific peak sets; this research was presented in \cite{BBPS14}.
Billey, Burdzy, and Sagan also computed closed formulas for the peak polynomials $p(n)$ for various special cases of $P(S;n)$
using the method of finite differences, and
Billey, Fahrbach, and Talmage then studied the coefficients and zeros of peak polynomials \cite{BFT14}.

Shortly after Billey, Burdzy, and Sagan's article appeared on the arXiv, Kasraoui proved one of their open conjectures and
identified the most probable peak set for a random permutation \cite{K14}.
Then Castro-Velez et al. generalized the work of Billey, Burdzy, and Sagan to study peak sets of type $B$ signed permutations
\cite{CV14}.
They studied two sets $P_B(S;n)$ and $\hat{P}_B(S;n)$ of signed permutations with peak set $S$, whose formal definition we
introduce in Subsection  \ref{comparison}.
Their main result regarding the set $P_B(S;n)$ used induction to prove (\cite[Theorem 2.4]{CV14})
\begin{align}
|P_B(S;n)| &= |P(S;n)|2^n=p(n) 2^{2n-| S|-1}. \label{Bpolyeq}
\end{align}
 Note that $p(n)$ is the same polynomial as that of Equation \eqref{polyeq}. 

Motivated by extending the above mentioned results to other classical Coxeter groups, our work begins by partitioning the sets $P(S;n)$ studied by Billey, Burdzy, and Sagan
into subsets $\ATK$ and $\DTK$ of permutations ending with an ascent or a descent to a fixed
 $k$,
respectively. With these partitions on hand, we show  in Theorems \ref{Pv1} and \ref{Pv2} that
the cardinalities of these sets are governed by polynomial formulas similar to those discovered by Billey, Burdzy, and Sagan.
These results are presented in Section \ref{PartitionPsn}.

We then embed the Coxeter groups of type $C_n$ and $D_n$ into $\S_{2n}$ and call these embedded subgroups
$\B_n, \D_n \subset \S_{2n}$
the \emph{mirrored permutations} of types $C_n$ and $D_n$ respectively (Section \ref{partitionbd}).
For each $\pi \in \S_{n}$ we define the
 \emph{pattern bundle} of $\pi$ in types $C_n$ and $D_n$ in Definitions \ref{relorder} and \ref{relorderD}.
Each \emph{pattern bundle} consists of permutations $\tau_1\tau_2\cdots \tau_n| \tau_{n+1}\cdots \tau_{2n}$ such that  $\tau_1
\tau_2 \cdots \tau_n$
flattens to $\pi_1\pi_2 \cdots \pi_n$, meaning $\tau_1\tau_2\cdots\tau_n$ has the same relative order as $\pi_1\pi_2\cdots\pi_n$.
These pattern bundles have the following properties: 1) they partition the groups $\B_n$ and $\D_n$; 2) they are
indexed by the elements of $\S_n$, and; 3) they have
size $2^n$ in $C_n$ and $2^{n-1}$ in $D_n$. 
This process allows us to give concise proofs of the following two identities (Theorem \ref{HIPB} \eqref{HIPB1} and \eqref{HIPB2}, respectively):
\[| P_C(S;n)|  = p(n) 2^{2n-| S|-1}\hspace{.2in}\mbox{and}\hspace{.2in}
| P_D(S;n)|  = p(n) 2^{2n-| S|-2}.\]
We note that the polynomial appearing above is the same as that of Equation \eqref{polyeq}. Moreover the proof
of Theorem
\ref{HIPB} (\ref{HIPB1}) is much shorter than the one given by \cite[Theorem 2.4]{CV14}, and Theorem \ref{HIPB} (\ref{HIPB2}) has not appeared before in the
literature.

Finally in Section \ref{typeCD} we prove our main result, Theorem \ref{key}.
 We use the formulas for  $|\ATK|$ and $|\DTK|$ from Section
\ref{PartitionPsn}
and
 sums of binomial coefficients to enumerate the set of permutations with peak set $S \subset [n]$ in $C_n$ and $D_n$.

We end this introduction with a remark on the history of this collaboration.
The last three authors of this article began their study of peak sets in classical Coxeter groups before Castro-Velez et al. had
published their results from
type $B_n$, and focused their study on the Coxeter (Weyl) groups of types $C_n$ and $D_n$ using presentations of these groups
described in Billey and Lakshmibai's text on \cite[pp. 29,34]{BL00}.   
While Perez-Lavin was presenting the preliminary results of this paper at the USTARS 2014
conference held at UC-Berkeley, we met Alexander Diaz-Lopez who told us of his recently completed work with Castro-Velez et al.
\cite{CV14}.
Knowing that the Coxeter groups of types $B$ and $C$ are isomorphic, we were immediately intrigued
to see what connections could be found between the two works.
We were delighted to find that we used vastly different techniques to count the elements of $P_B(S;n)$ and $P_C(S;n)$,
and discovered an isomorphism between the two groups which preserves peak sets (up to a reordering of the peaks).
We highlight these connections and compare and contrast the two works in Subsection \ref{comparison}.

\section{Partitioning the set \texorpdfstring{$P(S;n)$}{P(S;n)}} \label{PartitionPsn}



To make our approach precise, we begin by setting notation and giving some definitions.

\begin{definition}\label{alpha-delta}
For a given peak set $S\subset [n-1]$, we define
\begin{align*}
\ATK&:=\{\pi\in P(S;n) \ \mid \ \pi_{n-1}<\pi_n\mbox{ and }\pi_n=k\},\\
\cielo&:=\mathop{\sqcup}_{k=1}^n\ATK,\\
\DTK&:=\{\pi\in P(S;n) \ \mid \ \pi_{n-1}>\pi_n\mbox{ and }\pi_n=k\},\mbox{ and}\\
\infierno&:=\mathop{\sqcup}_{k=1}^n \DTK.
\end{align*}
\end{definition}
We remark that $P(S;n)^{\nearrow 1}=\emptyset$ because a permutation cannot end with an ascent to 1.
Similarly $P(S;n)_{\searrow n}=\emptyset$ since a permutation cannot end with a descent to $n$. Therefore
the sets $\cielo$ and $\infierno$ are the following disjoint unions of
sets
\[\cielo=\mathop{\sqcup}_{k=2}^n\ATK\mbox{    and   }
\infierno=\mathop{\sqcup}_{k=1}^{n-1} \DTK.\]
Since every $\pi\in P(S;n)$ either ends with an ascent or a descent we see \[ P(S;n)=\cielo\sqcup
\infierno. \]
Our next lemma counts the permutations without peaks that end with an ascent to $k$.
\begin{lemma}\label{powerof2}
If $2\leq k\leq n$, then $|P(\emptyset;n)^{\nearrow k}|=2^{k-2}$.
\end{lemma}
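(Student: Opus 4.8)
The plan is to prove this by a direct bijective argument built on the simple structure of peak-free permutations. The first step is to establish the structural fact that a permutation $\pi\in\S_n$ satisfies $P(\pi)=\emptyset$ if and only if $\pi$ is first strictly decreasing and then strictly increasing, i.e. there is an index $j$ with $\pi_1>\pi_2>\cdots>\pi_j<\pi_{j+1}<\cdots<\pi_n$. To see this, read off the sequence of ascents and descents of $\pi$: a peak at an interior index $i$ is precisely an ascent in position $i-1$ immediately followed by a descent in position $i$, so $P(\pi)=\emptyset$ forces the ascent/descent word to consist of a (possibly empty) block of descents followed by a (possibly empty) block of ascents. The unique local minimum is then attained at position $j$, and since $\pi$ is a permutation of $[n]$ this minimum value must be $1$; so $\pi_j=1$.

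Next I would specialize to $\pi\in P(\emptyset;n)^{\nearrow k}$. The condition $\pi_{n-1}<\pi_n$ says the last step is an ascent, hence the increasing block (positions $j,\dots,n$) has at least two entries, namely $\pi_j=1$ and $\pi_n$; and the condition $\pi_n=k$ fixes the top of this block. I would then observe that such a $\pi$ is determined entirely by the set $A$ of values that occupy its increasing block: those values must be written in increasing order, and the complementary values $[n]\setminus A$ must be written in decreasing order in positions $1,\dots,j-1$. The constraints on $A$ are exactly $1\in A$, $k\in A$, and $A\subseteq[k]$ — the last because the increasing block runs from $1$ up to $k$, so no value exceeding $k$ can lie in it; equivalently every value of $\{k+1,\dots,n\}$ is forced to the left of the valley. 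Conversely, any $A$ with $1,k\in A\subseteq[k]$ yields, by the above recipe, a genuine member of $P(\emptyset;n)^{\nearrow k}$ (it is decreasing–then–increasing since the left block lies above $1$, it is peak-free, and it ends with the ascent $\pi_{n-1}<\pi_n=\max A=k$), so this correspondence is a bijection.

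Finally I would count the admissible sets $A$: they are obtained from an arbitrary subset of $\{2,3,\dots,k-1\}$ by adjoining $1$ and $k$, so there are $2^{k-2}$ of them (for $k=2$ the interval is empty and the count is $2^0=1$, realized by the single permutation $n\,(n-1)\cdots 3\,1\,2$). Therefore $|P(\emptyset;n)^{\nearrow k}|=2^{k-2}$. The step I expect to be the main obstacle is the structural characterization together with the observation that all values larger than $k$ are forced into the decreasing part; this is exactly what makes the answer depend only on $k$ and not on $n$, and it should be stated and checked carefully rather than waved through.
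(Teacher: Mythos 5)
Your proof is correct and follows essentially the same route as the paper's: both decompose a peak-free permutation into a decreasing block, the valley $1$, and an increasing block ending at $k$, and both observe that the permutation is determined by which elements of $\{2,3,\dots,k-1\}$ join the increasing block, giving $2^{k-2}$. You simply spell out the valley-shape characterization and the fact that values exceeding $k$ are forced into the decreasing part in more detail than the paper does.
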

\begin{proof}
Let $2\leq k\leq n$ and suppose that $\pi=\pi_1\pi_2\cdots\pi_n\in P(\emptyset;n)^{\nearrow k}$. Hence
$P(\pi)=\emptyset$ and
$\pi_{n-1}<\pi_n=k$.
Let us further assume that $\pi=\tau_A \ 1\ \tau_B \ k$, where $\tau_A$ and $\tau_B$ are the portions of $\pi$ to the left and
right of $1$, respectively.
Since $P(\pi)=\emptyset$ we know $\tau_A$ must decrease, while $\tau_B$ must increase.
However the values of $\tau_B$ must come from the set $\{2,3,\ldots,k-1\}$ because $\pi_{n-1}<\pi_{n}=k$, and there is one $\pi
\in P(\emptyset;n)^{\nearrow k}$
for each subset of $\{2,3,\ldots,k-1\}$ as such a $\pi$ is completely determined by which elements from that set appear in
$\tau_B$.
Hence we see $|P(\emptyset;n)^{\nearrow k}|=2^{k-2}$.
\end{proof}

We will next prove a recursive formula for the number of permutations with specified peak set $S$ that end in an ascent to a
fixed integer $k$.

\begin{lemma}\label{ascenttok}
Let $S\subset [n-1]$ be a nonempty admissible set. Let $m=\max(S)$ and fix an integer $k$, where $1\leq k\leq n$. If $S_1=S
\setminus \{m\}$ and $S_2=S_1\cup\{m-1\}$, then
\[\lvert \ATK \rvert= \sum_{i=0}^{k-2} \binom{k-1}{i} \binom{n-k}{m-i-1} \lvert P(S_1;m-1)\rvert  2^{k-i-2} -\lvert
P(S_1;n)^{\nearrow k}\rvert-\lvert P(S_2;n)^{\nearrow k}\rvert.\]
\end{lemma}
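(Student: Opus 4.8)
The plan is to introduce a single auxiliary set $N$, to show that $|N|$ equals the sum on the right‑hand side, and then to produce a partition $N=\ATK\sqcup N_1\sqcup N_2$ with $|N_1|=|P(S_1;n)^{\nearrow k}|$ and $|N_2|=|P(S_2;n)^{\nearrow k}|$; the stated identity then follows by rearranging. Let $N$ be the set of permutations $\pi=\pi_1\pi_2\cdots\pi_n\in\S_n$ such that $\pi_n=k$, the initial segment $\pi_1\pi_2\cdots\pi_{m-1}$ has peak set $S_1$, and the final segment $\pi_m\pi_{m+1}\cdots\pi_n$ has no peaks and satisfies $\pi_{n-1}<\pi_n$. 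Two facts will be used repeatedly: since $S$ is admissible with $\max(S)=m$, no two elements of $S$ are consecutive, so $m-1\notin S$ and $\max(S_1)\le m-2$ (and $\max(S_1)\le m-3$ whenever $S_2$ is admissible). Also $\ATK\subseteq N$: if $\pi\in\ATK$ then $\pi_{m-1}<\pi_m>\pi_{m+1}$, positions $2,\dots,m-2$ carry exactly the peaks in $S_1$, and positions $\ge m+1$ carry no peaks, so the final segment is peak‑free.

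To evaluate $|N|$, I would build its elements in three steps. First choose the set of $m-1$ values to be placed in positions $1,\dots,m-1$; if $i$ of these values are smaller than $k$, there are $\binom{k-1}{i}\binom{n-k}{m-i-1}$ choices. Second, order them so that $\pi_1\cdots\pi_{m-1}$ has peak set $S_1$, which can be done in $|P(S_1;m-1)|$ ways. Third, the $n-m+1$ remaining values (which include $k$) must occupy positions $m,\dots,n$ so that $\pi_m\cdots\pi_n$ is a peak‑free sequence ending in an ascent to $k$; since exactly $k-1-i$ of the remaining values are smaller than $k$, the value $k$ has rank $k-i$ among them, and the counting argument of Lemma~\ref{powerof2}, applied with this value set in place of $[n]$, produces $2^{(k-i)-2}=2^{k-i-2}$ such orderings. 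When $i=k-1$ the value $k$ would be the smallest remaining value, so no valid ordering exists; this is why $i$ runs only over $0,\dots,k-2$, and any $i$ for which the value set is infeasible is killed by a vanishing binomial coefficient. Multiplying and summing over $i$ gives $|N|=\sum_{i=0}^{k-2}\binom{k-1}{i}\binom{n-k}{m-i-1}|P(S_1;m-1)|\,2^{k-i-2}$.

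Next I would compute $P(\pi)$ for an arbitrary $\pi\in N$. By construction the peaks of $\pi$ among positions $2,\dots,m-2$ form exactly $S_1$, and $\pi$ has no peaks among positions $m+1,\dots,n-1$; only positions $m-1$ and $m$ remain undetermined, and whether they are peaks depends only on the order relations among $\pi_{m-2},\pi_{m-1},\pi_m,\pi_{m+1}$ (with the convention that position $m-1$ is never a peak when $m=2$). A short inspection shows: $P(\pi)=S_1\cup\{m\}=S$ precisely when $\pi_{m-1}<\pi_m>\pi_{m+1}$; $P(\pi)=S_1\cup\{m-1\}=S_2$ precisely when $\pi_{m-1}>\pi_m$ and $\pi_{m-2}<\pi_{m-1}$; and $P(\pi)=S_1$ in all remaining cases. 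Hence $N=\ATK\sqcup N_1\sqcup N_2$ with $N_1:=\{\pi\in N:P(\pi)=S_1\}$ and $N_2:=\{\pi\in N:P(\pi)=S_2\}$, these three peak sets being distinct because $m-1,m\notin S_1$ and $m\notin S_2$.

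Finally I would check the set equalities $N_1=P(S_1;n)^{\nearrow k}$ and $N_2=P(S_2;n)^{\nearrow k}$. The inclusions $\subseteq$ are immediate, since every element of $N$ ends with $\pi_{n-1}<\pi_n=k$. For the reverse inclusions, let $\pi$ satisfy $P(\pi)=S_j$ and $\pi_{n-1}<\pi_n=k$; then $\max P(\pi)\le m-1$, so $\pi$ has no peaks at positions $\ge m$, forcing $\pi_m\pi_{m+1}\cdots\pi_n$ to be peak‑free, while the initial segment $\pi_1\cdots\pi_{m-1}$ has peak set $P(\pi)\cap\{2,\dots,m-2\}$, which equals $S_1$ by the bounds on $\max(S_1)$ recorded above. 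Thus $\pi\in N$, and $\pi\in N_j$ by the case analysis. Substituting $|N_1|$ and $|N_2|$ into $|N|=|\ATK|+|N_1|+|N_2|$ and solving for $|\ATK|$ gives the claimed formula. The step most prone to error is this case analysis together with the set equalities: one must track the degenerate configurations ($m=2$, $k=1$, $S_2$ inadmissible, and $m=n-1$ — the last of which forces $\ATK=\emptyset$), all of which are absorbed by the conventions above, and one must be careful that ``no peak at position $m-1$'' genuinely follows from $\pi_{m-1}<\pi_m$ alone.
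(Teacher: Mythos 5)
Your proof is correct and follows essentially the same route as the paper: your auxiliary set $N$ is exactly the paper's $\PiAK$, counted the same way via the choice of values for positions $1,\dots,m-1$ together with Lemma \ref{powerof2}, and then decomposed as $\ATK\sqcup P(S_1;n)^{\nearrow k}\sqcup P(S_2;n)^{\nearrow k}$. The only difference is that you verify this three-way decomposition and the degenerate cases explicitly, where the paper simply asserts them.
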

\begin{proof}
Observe that if $k=1$, then the result holds trivially as all terms in the statement are identically zero.  Let $2\leq k \leq n$ and
denote $ \PiAK$ as the set of permutations ending with an ascent to $k$ that have peak set $S_1$ in the first $m-1$ spots and
no peaks in the last $m-n+1$ i.e.,
\[\PiAK=\{\pi\in \S_n \ \mid  P(\pi_1\pi_2\cdots \pi_{m-1})=S_1, P(\pi_{m}\cdots \pi_{n} ) = \emptyset, \text{ and }
\pi_{n-1}<\pi_n=k\}.\]
We compute the cardinality of the set $\PiAK$ by counting the number of ways to construct a permutation in
$\PiAK$.

First we select a subset $P_1= \{ \pi_1, \pi_2, \ldots, \pi_{m-1} \} \subset [n] \setminus \{k\}$ (as we fix $\pi_n$ to be $k$).
When selecting $P_1$,
we can choose $i$ numbers from $\{1, 2, \ldots, k-1 \}$ to include in $P_1$  for each $0 \leq i \leq k-1$ and then choose the
remaining $m-i-1$ numbers from the set $\{ k+1,k+2, \ldots, n \}$ to fill the remainder of $P_1$.
Thus there are $\binom{k-1}{i}\cdot \binom{n-k}{m-i-1}$ ways to select the elements of $P_1$. By definition, there are $|P(S_1,
m-1)|$ ways to arrange the $m-1$ elements of $P_1$ into a permutation $\pi_1\pi_2\cdots \pi_{m-1}$ satisfying $P(\pi_1\pi_2\cdots
\pi_{m-1})=S_1$.

Let $P_2 = \{ \pi_m,\pi_{m+1}, \ldots, \pi_n\}=[n]\setminus P_1$, where $\pi_n = k$.  There are $n-(m-1)=n-m+1$ numbers in $P_2$,
and there are precisely $k-i-1$ elements from the set $\{1, 2, \ldots, k-1 \}$
that were not chosen to be part of $P_1$.  That means $k$ is the $(k-i)^\text{th}$ largest integer in the set $P_2$. By
flattening
the numbers in $P_2$, we can see there are $|P(\emptyset;n-m+1)^{{\nearrow k-i}}|$ ways to arrange the elements
of
$P_2$ to create
a subpermutation $\pi_m\pi_{m+1}\cdots \pi_n$ that satisfies
\[ P(\pi_{m}\cdots \pi_{n} ) = \emptyset \text{ and }  \pi_{n-1}<\pi_n=k .\]

By Lemma \ref{powerof2} we know that $|P(\emptyset;n-m+1)^{{\nearrow k-i}}|=2^{k-i-2}$ when $k-i \geq 2$ and it
is
$0$ otherwise.
Of course $k-i\geq 2$ when $i \leq k-2$.
Putting this all together, we see that the number of ways to create a permutation in $\PiAK$ is $\sum_{i=0}^{k-2}
\binom{k-1}{i} \binom{n-k}{m-i-1} \lvert P(S_1;m-1)\rvert
2^{k-i-2}$, or in other words
\begin{equation} \lvert \PiAK \rvert =\sum_{i=0}^{k-2} \binom{k-1}{i} \binom{n-k}{m-i-1} \lvert P(S_1;m-1)\rvert  2^{k-i-2}
\label{amazing}. \end{equation}

Next we consider a different way to count the elements of $\PiAK$. Note that we have not specified whether $\pi_{m-1} >
\pi_m$ or $\pi_{m-1} < \pi_m$.
So in particular based on the definition of $\PiAK$ and its restrictions on $P(\pi_1\pi_2\cdots\pi_{m-1})$ and
$P(\pi_{m}\pi_{m+1}\cdots\pi_n)$, all of the following are possible:
\[ P(\pi)=S, P(\pi)=S_1, \text{ or } P(\pi)=S_2, \text{ for } \pi\in\PiAK.\]
Hence \[\PiAK=\ATK\sqcup P(S_1;n)^{{\nearrow k}}\sqcup P(S_2;n)^{{\nearrow k}}.\]
Thus
\begin{align}
\lvert \PiAK \rvert &=\lvert \ATK\rvert +\lvert P(S_1;n)^{{\nearrow k}}\rvert +\lvert
P(S_2;n)^{{\nearrow k}} \rvert.\label{eq12}
\end{align}
The result follows from setting Equations \eqref{amazing} and \eqref{eq12} equal to each other and solving for the quantity
$\lvert \ATK \rvert.$
\end{proof}

The following lemma will be used in the proofs of Lemmas \ref{ascent} and \ref{descent}.
\begin{lemma} \label{emptyalphadelta}
If $n \geq 2$ then
\begin{itemize}
 \item the cardinality of $|\infiernovacio| = 1$, and
\item the cardinality of $|\cielovacio| = 2^{n-1}-1$.
\end{itemize}
\end{lemma}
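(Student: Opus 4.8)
The plan is to first pin down the shape of a peak‑free permutation and then read off both counts. Observe that $\pi$ has a peak at $i$ precisely when position $i-1$ is an ascent ($\pi_{i-1}<\pi_i$) and position $i$ is a descent ($\pi_i>\pi_{i+1}$). Hence $\pi\in P(\emptyset;n)$ exactly when, in the ascent/descent word of $\pi$ (a word of length $n-1$), no ascent is immediately followed by a descent; equivalently, every ascent is followed (when a next position exists) by another ascent. Inductively this forces the descents to form an initial block and the ascents a final block, so a peak‑free permutation is a ``valley'' permutation $\pi_1>\pi_2>\cdots>\pi_{j+1}<\pi_{j+2}<\cdots<\pi_n$, where $j$ is its number of descents.

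For $\infiernovacio$: such a $\pi$ ends in a descent, i.e.\ position $n-1$ is a descent; since the descents occupy the initial block $\{1,\dots,j\}$, this happens iff $j=n-1$, i.e.\ iff $\pi$ is the single decreasing permutation $n\,(n-1)\cdots 2\,1$. Therefore $|\infiernovacio|=1$. The hypothesis $n\ge 2$ is exactly what makes ``$\pi$ ends in a descent'' a meaningful condition (for $n=2$ the claim is checked directly).

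For $\cielovacio$ I would avoid reproving $|P(\emptyset;n)|=2^{n-1}$ and instead use the decomposition $\cielovacio=\bigsqcup_{k=2}^{n}P(\emptyset;n)^{\nearrow k}$ recorded before Lemma~\ref{powerof2}, together with Lemma~\ref{powerof2} itself, to get $|\cielovacio|=\sum_{k=2}^{n}2^{k-2}=2^{n-1}-1$. (Alternatively, once one grants $|P(\emptyset;n)|=2^{n-1}$—each of $2,\dots,n$ independently landing left or right of the entry $1$ in a valley permutation—and $P(\emptyset;n)=\cielovacio\sqcup\infiernovacio$, the count follows by subtraction from the first bullet.) There is no genuine obstacle here: essentially all of the content is in establishing the valley‑permutation structure, after which both cardinalities are immediate; in the route through Lemma~\ref{powerof2}, the only thing to be careful about is correctly summing the geometric series.
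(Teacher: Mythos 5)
Your proof is correct. For the first bullet you and the paper reach the same conclusion—the unique peak-free permutation ending in a descent is $n(n-1)\cdots 21$—though you justify it more carefully via the observation that a peak-free permutation must be a ``valley'' (descents forming an initial block, ascents a final block), whereas the paper simply asserts the uniqueness.

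For the second bullet your primary route differs from the paper's. The paper cites Billey, Burdzy, and Sagan's count $|P(\emptyset;n)|=2^{n-1}$ and subtracts the single descending permutation, using $P(\emptyset;n)=\cielovacio\sqcup\infiernovacio$. You instead sum the decomposition $\cielovacio=\bigsqcup_{k=2}^{n}P(\emptyset;n)^{\nearrow k}$ using Lemma~\ref{powerof2}, getting $\sum_{k=2}^{n}2^{k-2}=2^{n-1}-1$ as a geometric series. Both are immediate; what your route buys is self-containment within the paper's own machinery (Lemma~\ref{powerof2} is proved just above and is independent of this lemma, so there is no circularity), while the paper's route buys a one-line derivation at the cost of an external citation. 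Your parenthetical alternative is exactly the paper's argument, so you have in effect supplied both proofs.
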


\begin{proof}
The only permutation $\pi \in P(\emptyset;n)$ that ends in a descent is $ n=\pi_1>\pi_2> \cdots > \pi_n=1,$ therefore $|\infiernovacio| = 1$.
On the other hand, it is easy to see that $P(\emptyset;n) = 2^{n-1}$ as Billey, Burdzy, and Sagan proved in \cite[Proposition
2.1]{BBS13}.
Since $P(\emptyset;n) = \cielovacio \sqcup \infiernovacio$ we
compute \[ |\cielovacio| =
|P(\emptyset; n)|- |\infiernovacio|   =   2^{n-1}-1. \qedhere \]
\end{proof}

The following result allows us to recursively enumerate the set of permutations with specified peak set $S$  that end with an
ascent.

\begin{lemma}\label{ascent}Let $S\subset [n-1]$ be a nonempty $n$-admissible set, and let $m=\max(S)$.
If we let $S_1=S\setminus\{m\}$ and $S_2=S_1\cup\{m-1\}$, then
\[ \lvert \cielo\rvert =\binom{n}{m-1}\left(2^{n-m}-1\right)\lvert P(S_1;m-1)\rvert -\lvert
{\overline{P(S_1;n)}} \rvert-\lvert
{\overline{P(S_2;n)}} \rvert .\]
\end{lemma}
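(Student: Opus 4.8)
The plan is to mimic the proof of Lemma~\ref{ascenttok}, but now summing over the final value $k$ rather than fixing it. I would introduce the set
\[\PiA = \{\pi\in\S_n \mid P(\pi_1\pi_2\cdots\pi_{m-1})=S_1,\ P(\pi_m\cdots\pi_n)=\emptyset,\ \pi_{n-1}<\pi_n\},\]
the analogue of $\PiAK$ with the ``ascent to a fixed $k$'' condition relaxed to just ``ends in an ascent.'' The two-way counting strategy carries over: on one hand $\PiA = \cielo \sqcup \overline{P(S_1;n)} \sqcup \overline{P(S_2;n)}$ by exactly the same case analysis on whether $\pi_{m-1}$ is less than or greater than $\pi_m$, so $|\PiA| = |\cielo| + |\overline{P(S_1;n)}| + |\overline{P(S_2;n)}|$. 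On the other hand, I count $|\PiA|$ directly by first choosing which $m-1$ values from $[n]$ occupy the first $m-1$ positions, arranging them in one of $|P(S_1;m-1)|$ ways, and then arranging the remaining $n-m+1$ values in the last block so that block has no peaks and ends in an ascent.

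The key simplification relative to Lemma~\ref{ascenttok} is that the direct count should collapse. The number of ways to arrange the last $n-m+1$ values as a peak-free permutation ending in an ascent is, by Lemma~\ref{emptyalphadelta} applied with $n\mapsto n-m+1$, exactly $|\cielovacio[n-m+1]| = 2^{n-m}-1$, a quantity that does \emph{not} depend on which subset was chosen for the first block. Meanwhile the number of ways to pick the first block and arrange it is $\binom{n}{m-1}|P(S_1;m-1)|$, since any $(m-1)$-subset of $[n]$ works. Multiplying gives $|\PiA| = \binom{n}{m-1}(2^{n-m}-1)|P(S_1;m-1)|$. Equating the two expressions for $|\PiA|$ and solving for $|\cielo|$ yields the claimed formula.

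The main point requiring care — and the step I expect to be the only real obstacle — is justifying that the last-block count is genuinely independent of the chosen first block, i.e.\ that the ``flattening'' argument used in Lemma~\ref{ascenttok} really does reduce arranging an arbitrary $(n-m+1)$-element subset of $[n]$ (peak-free, ending in an ascent) to counting $\cielovacio[n-m+1]$. In Lemma~\ref{ascenttok} one had to track the rank of the fixed terminal value $k$ inside the second block; here there is no fixed terminal value, so the bijection ``relabel the chosen values by their relative order'' cleanly sends peak-free permutations ending in an ascent to peak-free permutations of $[n-m+1]$ ending in an ascent, and is a bijection because relative order determines peaks and ascents. I would state this flattening bijection explicitly, invoke Lemma~\ref{emptyalphadelta} for the count, and then the rest is the same bookkeeping as in the previous proof.

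Overall, I would write roughly four short paragraphs: (1) define $\PiA$ and record the disjoint-union decomposition $\PiA = \cielo \sqcup \overline{P(S_1;n)} \sqcup \overline{P(S_2;n)}$ with the same case analysis as in Lemma~\ref{ascenttok}; (2) compute $|\PiA|$ directly via the choose-the-block-then-flatten argument, citing Lemma~\ref{emptyalphadelta}; (3) equate and solve for $|\cielo|$; (4) note the $k=1$ and edge cases are subsumed since $P(S;n)^{\nearrow 1}=\emptyset$.
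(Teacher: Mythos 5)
Your proposal is correct and follows essentially the same route as the paper: the paper's proof of Lemma \ref{ascent} introduces exactly the set $\PiA$ you describe, computes $\lvert\PiA\rvert=\binom{n}{m-1}\left(2^{n-m}-1\right)\lvert P(S_1;m-1)\rvert$ via the choose-the-first-block-then-flatten argument together with Lemma \ref{emptyalphadelta}, and then equates this with the decomposition $\PiA=\cielo\sqcup{\overline{P(S_1;n)}}\sqcup{\overline{P(S_2;n)}}$. The only cosmetic difference is notational: the quantity you denote informally as the count for the last block is written in the paper as $\lvert{\overline{P(\emptyset;n-m+1)}}\rvert=2^{n-m}-1$.
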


\begin{proof}
Let $S\subset [n-1]$ be an admissible set with $m=\max(S)$.
Define the sets $S_1=S\setminus\{m\},$ $ S_2=S_1\cup\{m-1\}$ and \[\PiA=\{\pi\in \S_n \ \mid \ P(\pi_1\pi_2\cdots
\pi_{m-1})=S_1, P(\pi_{m}\cdots\pi_n)=\emptyset \text{ and }  \pi_{n-1}<\pi_{n}\}.\]
Next we compute the cardinality of the set $\PiA$.
To do so, we observe that there are $\binom{n}{m-1}$ choices for the values of $\pi_{1},\ldots,\pi_{m-1}$, and by definition,
there are $\lvert P(S_1;m-1)\rvert$ ways to arrange the values of
$\pi_{1},\ldots,\pi_{m-1}$  so that $P(\pi_1\pi_2\cdots\pi_{m-1}) = S_1$.
Once we have chosen the values of $\pi_1, \pi_2, \ldots, \pi_{m-1}$, the values of \[ \pi_m, \pi_{m+1}, \pi_{m+2}, \ldots,
\pi_n\]
are determined.
We note that there are $|{\overline{P(\emptyset;n-m+1)}}|$ ways to arrange the values of $\pi_m, \ldots,\pi_n$,
so
that
$P(\pi_m\cdots\pi_n)=\emptyset$ and $\pi_{n-1}<\pi_n$.

Yet Lemma \ref{emptyalphadelta} proved that $|{\overline{P(\emptyset;n-m+1)}}|=2^{n-m}-1$. Hence we see that
\begin{align}
\lvert \PiA \rvert &=\binom{n}{m-1}\left(2^{n-m}-1\right)\lvert P(S_1;m-1)\rvert .\label{eq3}
\end{align}

On the other hand  $\PiA=\cielo\sqcup {\overline{P(S_1;n)}}\sqcup
{\overline{P(S_2;n)}}$ by the defining conditions of $\PiA$.
Hence
\begin{align}
\lvert \PiA \rvert&=\lvert \cielo\rvert +\lvert {\overline{P(S_1;n)}}\rvert
+\lvert
{\overline{P(S_2;n)}} \rvert.\label{eq4}
\end{align}
When we set the right hand sides of Equations (\ref{eq3}) and (\ref{eq4}) equal to each other and solve for $\lvert
\cielo\rvert$ we see that
\[ \lvert \cielo\rvert =\binom{n}{m-1}\left(2^{n-m}-1\right)\lvert P(S_1;m-1)\rvert -\lvert
{\overline{P(S_1;n)}} \rvert-\lvert
{\overline{P(S_2;n)}} \rvert . \qedhere \]
\end{proof}

The following examples illustrate the recursion used to prove Lemmas \ref{ascenttok} and \ref{ascent}.

\begin{example}
We make use of Lemma \ref{ascenttok} to compute $\lvert P(\{3\};5)^{{\nearrow
3}}\rvert$. Let $S$ be the set
$S=\{3\}\subset[5]$. Note that $m=\max(S)=3$. Then we compute
	\begin{equation}\label{eq10}
		\lvert P(\{3\};5)^{{\nearrow
3}}\rvert=\left(\binom{2}{0}\binom{2}{2}2^{1}+\binom{2}{1}\binom{2}{1}2^{0}\right)\lvert
P(\emptyset;2)\rvert -\lvert P(\emptyset;5)^{{\nearrow 3}} \rvert-\lvert P(\{2\};5)^{{\nearrow
3}}\rvert.
	\end{equation}
Some small computations show that $P(\emptyset; 2) = \{12, 21\} $, $P(\emptyset; 5)^{{\nearrow 3}} =
\{54213, 54123\} $, and
	\[P(\{2\}; 5)^{{\nearrow 3}} =  \left \{ 45213, 25413, 45123, 15423 \right \}.\]
 Accordingly, we can see that Equation \eqref{eq10} gives
	\[ \lvert P(\{3\};5)^{{\nearrow 3}}\rvert=(2+4)(2)-2-4=6.\]
\end{example}
\begin{example}
In this example we make use of Lemma \ref{ascent} to compute $\lvert  {\overline{P(\{3\};5)}}\rvert$. If we let
$S=\{3\} \subset [5]$ then
$m=\max(S)=3$. We then have
	\begin{align}
 		\lvert {\overline{P(\{3\};5)}}\rvert =\binom{5}{2}\left(2^{5-3}-1\right)\lvert
P(\emptyset;2)\rvert -\lvert
{\overline{P(\emptyset;5)}} \rvert-\lvert {\overline{P(\{2\};5)}} \rvert. \label{eqex2}
	\end{align}
Some small computations show
\begin{align*}
P(\emptyset; 2) &= \{12 , 21\} \\
{\overline{P(\emptyset; 5)}}&=\left \{ \begin{matrix} 54321, 54213, 54123, 53214, 53124, 52134, 51234,43215,\\
43125, 42135, 32145, 41235,
31245, 21345, 12345\end{matrix} \right \}.
\end{align*}
Direct computations yield
\[{\overline{P(\{2\}; 5)}}=\left \{ \begin{matrix}
45312, 35412, 45213, 25413, 45123,15423, 35214,25314, 35124, 25134,\\ 15324, 15234, 34215, 24315, 34125, 24135, 23145, 14325,
14235, 13245
\end{matrix} \right \}.\]
Equation $\eqref{eqex2}$ gives
	\[\lvert {\overline{P(\{3\};5)}}\rvert =\binom{5}{2}\left(2^{5-3}-1\right)(2) -15-20=25.\]
\end{example}

Next we consider permutations that end in a descent to a specific value $k$.

\begin{lemma}\label{descenttok}Let $S\subset [n-1]$ be a nonempty admissible set, $m=\max(S)$, and fix an integer $k$, where
$1\leq k\leq n$. If $S_1=S\setminus\{m\}$ and $S_2=S_1\cup\{m-1\}$, then
\[\lvert  \DTK\rvert=\binom{n-k}{n-m}\lvert  P(S_1;m-1)\rvert-\lvert
P(S_1;n)_{{\searrow k}}\rvert-\lvert  P(S_2;n)_{{\searrow k}}\rvert.\]
\end{lemma}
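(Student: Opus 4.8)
The plan is to mirror the two-count strategy used for Lemma~\ref{ascenttok}, taking advantage of the fact that the tail of a no-peak permutation ending in a descent is completely rigid. The trivial case is $k=n$, where $\DTK=\emptyset$ and every term on the right is $0$, so assume $1\le k\le n-1$. Exactly as before, introduce the auxiliary set
\[\PiDK=\{\pi\in\S_n \ \mid \ P(\pi_1\pi_2\cdots\pi_{m-1})=S_1,\ P(\pi_m\cdots\pi_n)=\emptyset,\ \text{and } \pi_{n-1}>\pi_n=k\},\]
and compute $|\PiDK|$ in two different ways.

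\textbf{First count.} The key observation is that a sequence with no peaks consists of a strictly decreasing run followed by a strictly increasing run, so such a sequence that \emph{ends} in a descent must be strictly decreasing. Hence, for $\pi\in\PiDK$, the block $\pi_m\pi_{m+1}\cdots\pi_n$ is forced to be the decreasing arrangement of its value set $P_2:=\{\pi_m,\ldots,\pi_n\}$, and in particular $k=\pi_n=\min P_2$. Therefore $P_2$ may be any $(n-m+1)$-element subset of $\{k,k+1,\ldots,n\}$ containing $k$ — there are $\binom{n-k}{n-m}$ of these, since we are choosing $P_2\setminus\{k\}$ inside $\{k+1,\ldots,n\}$ — and once $P_2$ (equivalently $P_1:=[n]\setminus P_2$) is fixed, there are $|P(S_1;m-1)|$ ways to arrange $P_1$ in the first $m-1$ positions with peak set $S_1$, while the last $n-m+1$ positions are then determined. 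Since $m\le n-1$, this tail has length at least $2$ and so genuinely ends in a descent; checking that the resulting $\pi$ lies in $\PiDK$ and that every element of $\PiDK$ arises uniquely this way gives
\[|\PiDK|=\binom{n-k}{n-m}\,|P(S_1;m-1)|.\]

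\textbf{Second count.} As in the proof of Lemma~\ref{ascenttok}, the definition of $\PiDK$ imposes no relation between $\pi_{m-1}$ and $\pi_m$, so for $\pi\in\PiDK$ the full peak set $P(\pi)$ is one of $S_1$, $S_2=S_1\cup\{m-1\}$, or $S=S_1\cup\{m\}$, according to whether no new peak is created at index $m-1$ or $m$, a peak is created at $m-1$ (forcing $\pi_{m-1}>\pi_m$), or a peak is created at $m$ (forcing $\pi_{m-1}<\pi_m$). The last two possibilities are mutually exclusive, and $n$-admissibility of $S$ guarantees $m-1\notin S$ while $\max(S_1)\le m-2$, so these three sets are distinct. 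Consequently
\[\PiDK=\DTK\ \sqcup\ P(S_1;n)_{\searrow k}\ \sqcup\ P(S_2;n)_{\searrow k}\]
is a genuine disjoint union, whence $|\PiDK|=|\DTK|+|P(S_1;n)_{\searrow k}|+|P(S_2;n)_{\searrow k}|$. Equating this with the first count and solving for $|\DTK|$ yields the claimed formula.

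\textbf{Main obstacle.} The delicate step is justifying the partition in the second count: one must verify both that each of the three sets on the right is actually contained in $\PiDK$ (given $P(\pi)\in\{S_1,S_2,S\}$ and an ending descent to $k$, the first $m-1$ entries and the last $n-m+1$ entries automatically exhibit the prescribed peak behaviour, using that the descent-ending tail is forced to be decreasing) and that no $\pi\in\PiDK$ has any other peak set, and then that the three candidate peak sets are pairwise distinct — this is exactly where $n$-admissibility of $S$ (ruling out the consecutive pair $\{m-1,m\}$) and the resulting bound $\max(S_1)\le m-2$ are used. Everything else parallels the ascent case, with the pleasant simplification that the descent tail contributes only the single binomial coefficient $\binom{n-k}{n-m}$, rather than the sum $\sum_{i=0}^{k-2}\binom{k-1}{i}\binom{n-k}{m-i-1}2^{k-i-2}$ from Lemma~\ref{ascenttok}, since there is a unique no-peak arrangement ending in a descent instead of $2^{k-i-2}$ of them.
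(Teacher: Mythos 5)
Your proposal is correct and follows essentially the same route as the paper: both double-count the auxiliary set $\PiDK$, obtaining $\binom{n-k}{n-m}\lvert P(S_1;m-1)\rvert$ on one hand and the three-way disjoint union $\DTK\sqcup P(S_1;n)_{\searrow k}\sqcup P(S_2;n)_{\searrow k}$ on the other. Your phrasing of $\PiDK$ via ``no peaks in the tail plus a final descent'' is equivalent to the paper's explicit decreasing-tail condition, and the extra details you supply (the $k=n$ case and the justification that the three peak sets are distinct) only make the argument more careful.
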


\begin{proof}
Let $k$ be a fixed integer between $1$ and $n$. Let $S\subset [n-1]$ with $m=\max(S)$. We let $S_1=S\setminus\{m\}$ and
$S_2=S_1\cup\{m-1\}$. We let
\[\PiDK=\{\pi\in \S_n \ \mid P(\pi_1\pi_2\cdots \pi_{m-1})=S_1,\; \pi_{m}>\pi_{m+1}>\cdots>\pi_{n-1}>\pi_n=k\}.\]
We want to compute the cardinality of the set $\PiDK$. To do so, we observe that the values of $\pi_{m},\ldots,\pi_{n-1}$
must be larger than $k$. There are $n-k$ possible values, namely the values $k+1,k+2,\ldots,n$. Hence we can select $n-m$ values
from $n-k$ possible options, which gives us $\binom{n-k}{n-m}$ choices.
Then this determines the remaining values $\pi_1,\ldots,\pi_{m-1}$. By definition $\lvert  P(S_1;m-1)\rvert$ gives the number of
elements with peak set $S_1$. Hence
\begin{align}
\lvert  \PiDK\rvert&=\binom{n-k}{n-m}\lvert  P(S_1;m-1)\rvert.\label{eq5}
\end{align}
Notice that by construction $\PiDK=\DTK\sqcup P(S_1;n)_{{\searrow k}}\sqcup
P(S_2;n)_{{\searrow k}}.$
Hence the cardinality of $\PiDK$ is also given by
\begin{align}
\lvert  \PiDK\rvert&=\lvert  \DTK\rvert+\lvert  P(S_1;n)_{{\searrow
k}}\rvert+\lvert  P(S_2;n)_{{\searrow k}}\rvert.\label{eq6}
\end{align}
The result follows from setting Equations \eqref{eq5} and \eqref{eq6} equal to each other and solving for the quantity $\lvert
\DTK\rvert.$
\end{proof}
The following result allows us to recursively enumerate the set of permutations with specified peak set $S$  that end with a
descent.

\begin{lemma} \label{descent}
Let $S\subset [n-1]$ be a nonempty admissible set, $m=\max(S)$. If $S_1=S\setminus\{m\}$ and $S_2=S_1\cup\{m-1\}$, then
\[\lvert  \infierno\rvert=\binom{n}{m-1}\lvert  P(S_1;m-1)\rvert-\lvert
{\underline{P(S_1;n)}}\rvert-\lvert  {\underline{P(S_2;n)}}\rvert.\]
\end{lemma}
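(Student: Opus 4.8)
The plan is to mirror the proof of Lemma~\ref{ascent} almost verbatim, simply replacing the ascending tail by a descending one. First I would introduce the auxiliary set
\[\PiD=\{\pi\in\S_n \mid P(\pi_1\pi_2\cdots\pi_{m-1})=S_1,\ P(\pi_m\cdots\pi_n)=\emptyset,\ \pi_{n-1}>\pi_n\},\]
and compute $|\PiD|$ in two different ways, then equate the results.

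\emph{First count.} There are $\binom{n}{m-1}$ ways to choose the set of values placed in positions $1,\dots,m-1$, and by definition $|P(S_1;m-1)|$ ways to arrange them so that $P(\pi_1\cdots\pi_{m-1})=S_1$. This fixes the set of values in positions $m,\dots,n$, which must then be arranged so that $P(\pi_m\cdots\pi_n)=\emptyset$ and $\pi_{n-1}>\pi_n$; after flattening, the number of such arrangements is $|\underline{P(\emptyset;n-m+1)}|$. Since $m=\max(S)\le n-1$ we have $n-m+1\ge 2$, so Lemma~\ref{emptyalphadelta} gives exactly one such arrangement (the strictly decreasing one). Hence $|\PiD|=\binom{n}{m-1}\,|P(S_1;m-1)|$.

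\emph{Second count.} I claim $\PiD=\infierno\sqcup\underline{P(S_1;n)}\sqcup\underline{P(S_2;n)}$. For $\pi\in\PiD$, the defining conditions pin down all peaks of $\pi$ except possibly at the two ``junction'' indices $m-1$ and $m$; since a peak set cannot contain two consecutive integers, at most one of $m-1,m$ is a peak of $\pi$, so $P(\pi)\in\{S_1,\ S_1\cup\{m-1\},\ S_1\cup\{m\}\}=\{S_1,S_2,S\}$. Conversely, if $\pi$ ends in a descent and $P(\pi)$ is one of $S_1,S_2,S$, then (using again that no two peaks are consecutive, so $m-1\notin S$ and hence $\max(S_1)\le m-2$) one checks that $P(\pi_1\cdots\pi_{m-1})=P(\pi)\cap\{2,\dots,m-2\}=S_1$ and $P(\pi_m\cdots\pi_n)=P(\pi)\cap\{m+1,\dots,n-1\}=\emptyset$, so $\pi\in\PiD$. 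The three pieces are disjoint because they are distinguished by $P(\pi)$, so $|\PiD|=|\infierno|+|\underline{P(S_1;n)}|+|\underline{P(S_2;n)}|$.

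Equating the two expressions and solving for $|\infierno|$ gives the stated formula. The only step needing care is the second count: one must verify that the junction between the two blocks can create a peak only at $m-1$ or $m$, and that no permutation with the ``wrong'' restricted peak data is accidentally included or excluded. Both points follow from the elementary fact that peak sets have no two consecutive elements, together with the bound $\max(S_1)\le m-2$; the rest is bookkeeping identical in spirit to Lemmas~\ref{ascent} and \ref{descenttok}.
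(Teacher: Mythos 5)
Your proof is correct, but it takes a different route from the paper's. The paper obtains Lemma~\ref{descent} as a corollary of Lemma~\ref{descenttok}: it writes $\lvert\infierno\rvert=\sum_{k=1}^{n-1}\lvert\DTK\rvert$, substitutes the $k$-indexed formula, and collapses the sum with the hockey-stick identity $\sum_{k}\binom{k}{c}=\binom{n+1}{c+1}$ (note $\sum_{k=1}^{n-1}\binom{n-k}{n-m}=\binom{n}{n-m+1}=\binom{n}{m-1}$). You instead give a standalone two-way count of the auxiliary set $\PiD=\{\pi\in\S_n\mid P(\pi_1\cdots\pi_{m-1})=S_1,\ P(\pi_m\cdots\pi_n)=\emptyset,\ \pi_{n-1}>\pi_n\}$, exactly parallel to the paper's proof of Lemma~\ref{ascent}, using $\lvert\infiernovacio[n-m+1]\text{-analogue}\rvert=\lvert{\underline{P(\emptyset;n-m+1)}}\rvert=1$ from Lemma~\ref{emptyalphadelta} for the first count and the decomposition $\PiD=\infierno\sqcup{\underline{P(S_1;n)}}\sqcup{\underline{P(S_2;n)}}$ for the second. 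Both arguments are sound; yours is self-contained (it does not need Lemma~\ref{descenttok} or the binomial identity) and makes the symmetry with Lemma~\ref{ascent} explicit, while the paper's derivation buys economy by reusing the finer $k$-indexed result it has already established. Your attention to the junction indices $m-1$ and $m$, and to the fact that admissibility forces $m-1\notin S$ so $\max(S_1)\le m-2$, is exactly the bookkeeping needed to justify the disjoint-union step, which the paper asserts with the phrase ``by construction.''
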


\begin{proof}
{By Definition \ref{alpha-delta}, 
\[ |\infierno|
 = \sum_{k=1}^{n-1}|\DTK|. \] }
 Using this equation and Lemma \ref{descenttok} we get
	\begin{align*}
		 |\infierno| & =\sum_{k=1}^{n-1}\left[ \binom{n-k}{n-m}\lvert  P(S_1;m-1)\rvert-\lvert
P(S_1;n)_{{\searrow k}}\rvert-\lvert  P(S_2;n)_{{\searrow k}}\rvert\right] & \\ &
=\binom{n}{m-1}|P(S_1; m-1)| - |{\underline{P(S_1; n)}}|-|{\underline{P(S_2; n)}}|,
	\end{align*}
where the last equality comes from the identity $\sum^n_{k=0} \binom{k}{c} = \binom{n+1}{c+1}$.
\end{proof}

As before, we provide an example that illustrates the use of the previous results.

\begin{example}Consider the set $S=\{3\}\subset[5]$, hence $m=\max(S)=3$. We want to compute $\lvert
P(\{3\};5)_{{\searrow 2}}\rvert$. By
Lemma \ref{descenttok} we have that
\begin{align*}
\lvert  P(\{3\};5)_{{\searrow 2}}\rvert&=\binom{3}{2}\lvert  P(\emptyset;2)\rvert-\lvert
P(\emptyset;5)_{{\searrow 2}}\rvert-\lvert
P(\{2\};5)_{{\searrow 2}}\rvert.
\end{align*}
Some simple computations show that \[P(\emptyset; 2) = \{12 , 21\} ,\ P(\emptyset; 5)_{{\searrow 2}}= \emptyset
,\mbox{ and }P(\{2\}; 5)_{{\searrow 2}} =  \{15432\}.\]
Therefore
\[\lvert  P(\{3\};5)_{{\searrow 2}}\rvert=3(2)-0-1=5.\]
In fact,  $P(\{3\};5)_{{\searrow 2}}=\{51432, 41532, 31542, 14532, 13542\}.$

We want to compute $|{\underline{P(\{3\};5)}}|$. By Lemma \ref{descent} we have that
\begin{align*}
\lvert  {\underline{P(\{3\};5)}}\rvert&=\binom{5}{2}\lvert  P(\emptyset;2)\rvert-\lvert
{\underline{P(\emptyset;5)}}\rvert-\lvert
{\underline{P(\{2\};5)}}\rvert.
\end{align*}
Again we can compute that
\[P(\emptyset; 2) = \{12 , 21\} ,\ {\underline{P(\emptyset; 5)}} = \{ 54321 \} ,\mbox{ and }
{\underline{P(\{2\}; 5)}} =  \{ 45321, 35421, 25431,
15432  \}.\]
Thus
\[\lvert  {\underline{P(\{3\};5)}}\rvert=10(2)-1-4=15.\]
In fact,  ${\underline{P(\{3\};5)}}=\left \{ \begin{matrix}53421, 43521, 34521, 52431, 42531, 32541, 24531,\\
23541, 51432, 41532,31542,
21543, 14532, 13542, 12543 \end{matrix}\right\}.$
\end{example}

The following two theorems allow us to easily calculate closed formulas for $\lvert \infierno \rvert $
and $\lvert P(S;n)^{{\nearrow k}} \rvert $ using
the method of finite differences  \cite[Proposition 1.9.2]{S12}.  We start by applying Lemma \ref{descent} in an induction
argument to show $|\infierno|$ is given by a polynomial $p_\d(n)$.

\begin{theorem}\label{Pv1}
 Let $S\subset [n-1]$ be an admissible set. If $S= \emptyset$ take $m=1$; otherwise let $m=\max(S)$.
 Then the cardinality of the set $\infierno$ is given by
 \[ \lvert \infierno \rvert = p_\d(n)\]
 where $p_\d(n)$ is a polynomial in the variable $n$ of degree $m-1$ that returns integer values for all integers $n$.
\end{theorem}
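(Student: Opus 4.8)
The plan is to prove the statement by induction on $m = \max(S)$, using Lemma \ref{descent} as the recursive engine and the method of finite differences to convert the recursion into a statement about polynomial degree. First I would establish the base cases. When $S = \emptyset$ (so $m = 1$), Lemma \ref{emptyalphadelta} gives $|\infiernovacio| = 1$ for all $n \geq 2$, which is a constant polynomial, i.e., a polynomial of degree $0 = m-1$. When $|S| = 1$, say $S = \{s\}$ with $m = s$, applying Lemma \ref{descent} gives $|\infierno| = \binom{n}{m-1}|P(\emptyset; m-1)| - |\infiernovacio| - |\underline{P(\{m-1\}; n)}|$; here $|P(\emptyset; m-1)|$ is a fixed constant, $\binom{n}{m-1}$ is a polynomial in $n$ of degree $m-1$, and the subtracted terms involve peak sets whose maxima are strictly less than $m$, so they would be handled by an inner induction as well. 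This suggests the induction should really be on $m$, with the statement for all admissible $S$ with $\max(S) = m$ (and, within fixed $m$, the recursion terminates because $S_1$ and $S_2$ have smaller or equal max but strictly fewer "top" elements).

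The key subtlety is the structure of the recursion in Lemma \ref{descent}: it expresses $|\infierno|$ in terms of $|P(S_1; m-1)|$ (with $S_1 = S \setminus \{m\}$, so $\max(S_1) \leq m-2$ as a subset of $[m-2]$), and in terms of $|\underline{P(S_1;n)}|$ and $|\underline{P(S_2;n)}|$ where $S_2 = S_1 \cup \{m-1\}$ has $\max(S_2) = m-1 < m$. So everything on the right-hand side is governed by the inductive hypothesis applied to strictly smaller values of the max. Concretely: by the outer induction hypothesis, $|\underline{P(S_1;n)}|$ is a polynomial in $n$ of degree $\max(S_1) - 1 \leq m - 3$, and $|\underline{P(S_2;n)}|$ is a polynomial in $n$ of degree $\max(S_2) - 1 = m - 2$. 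The term $|P(S_1; m-1)|$ does not depend on $n$ at all—it is simply a constant coefficient (here one should note $P(S_1;m-1)$ uses the Billey--Burdzy--Sagan cardinality, which is a fixed nonnegative integer once $S_1$ and $m$ are fixed). Multiplying this constant by $\binom{n}{m-1}$, a polynomial in $n$ of degree exactly $m-1$ taking integer values at integers, and subtracting two polynomials of degree at most $m-2$, yields a polynomial $p_\d(n)$ in $n$ of degree exactly $m-1$ (the leading term cannot be cancelled, since the subtracted pieces have strictly lower degree), which takes integer values at all integers. That the values are integers is automatic since $|\infierno|$ counts a finite set for each $n$; alternatively, one observes it is an integer combination of binomial coefficients $\binom{n}{j}$.

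The main obstacle will be bookkeeping the recursion carefully enough to see that it genuinely terminates and that the inductive hypothesis applies in the form needed. In particular one must handle the possibility that $S_1 = \emptyset$ (when $|S| = 1$), in which case $|P(S_1; m-1)| = |P(\emptyset;m-1)| = 2^{m-2}$ by \cite[Proposition 2.1]{BBS13} and the base case of the induction covers $|\underline{P(S_1;n)}| = |\infiernovacio|$; and the possibility that $S_2$ is not admissible for small $n$, in which case the corresponding term vanishes and the polynomial identity still holds for $n$ in the relevant range (one should restrict attention to $n$ large enough that $S$ itself is $n$-admissible, and invoke the polynomiality as an identity of polynomials agreeing on infinitely many integers). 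A secondary point to verify is that applying Lemma \ref{descent} requires $S$ nonempty; the $S = \emptyset$ case is genuinely separate and is exactly Lemma \ref{emptyalphadelta}. Once these edge cases are dispatched, the degree count is immediate from the observation that adding a polynomial of degree $d$ to polynomials of degree $< d$ preserves degree $d$, and the integer-valued property follows either from the counting interpretation or from \cite[Proposition 1.9.2]{S12}.
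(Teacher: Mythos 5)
Your proposal is correct and matches the paper's argument in all essentials: both use Lemma \ref{descent} as the recursive engine, treat $\lvert P(S_1;m-1)\rvert$ as a constant (via Equation \eqref{polyeq}) multiplying the degree-$(m-1)$ polynomial $\binom{n}{m-1}$, and dispose of the two subtracted terms by the inductive hypothesis since they have strictly smaller degree. The only cosmetic difference is that the paper inducts on $\sum_{i\in S}i$ rather than on $\max(S)$; both quantities strictly decrease when passing to $S_1$ and $S_2$, so either scheme is valid.
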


\begin{proof}
 We induct on the sum $\t =\sum_{i \in S} i$. {When $\t=0$ the set $S$ is empty. By Lemma  \ref{emptyalphadelta} we get $|\infiernovacio |=1$, and so $p_{\delta}(n)=1$ is a polynomial of degree $0$.}

 Let $S\subset [n-1]$ be non-empty, with $m=\max(S)$ and $\sum_{i \in S} i = \t$.
 Let $S_1=S\setminus\{m\}$ and $S_2=S_1\cup\{m-1\}$, and
 note, in particular, that the sums $\sum_{i \in S_1} i $ and $\sum_{i \in S_2} i$ are both strictly less than $\t$.  By induction, we know that $|{\underline{P(S_1;n)}}|=p_{\d_1}(n) \text{ and } |{\underline{P(S_2;n)}}|=p_{\d_2}(n)$, where $p_{\d_1}$ and $p_{\d_2}$ are polynomials of degrees less than $m-1$ that have integral values when
evaluated at integers. 

 {By Equation \eqref{polyeq} we have
 $|P(S_1;m-1)|= p(m-1)2^{(m-1)-|S_1|-1}$ and this expression
 returns an integer value when evaluated at any integer $m-1$ \cite[Theorem 2.2]{BBS13}.}  Since the expression $p(m-1)2^{(m-1)-|S_1|-1}$ is an integer-valued constant with respect to $n$, we see that
$p(m-1)2^{(m-1)-|S_1|-1}\binom{n}{m-1}$
is a polynomial in the variable $n$ of degree $m-1$. These facts, together with Lemma \ref{descent} imply
 \begin{align*}
 \lvert  \infierno\rvert&=\binom{n}{m-1}\lvert  P(S_1;m-1)\rvert-\lvert
{\underline{P(S_1;n)}}\rvert-\lvert  {\underline{P(S_2;n)}}\rvert \\
 				&=\binom{n}{m-1} p(m-1)2^{(m-1)-|S_1|-1}  -p_{\d_1}-p_{\d_2} \\
				&=p_\d
\label{short1}
 \end{align*}
  where $p_\d$ is a polynomial in the variable $n$ of degree $m-1$ that has integer values when evaluated at integers.
\end{proof}

Using Lemma \ref{ascenttok}, we show $\lvert \ATK \rvert$ is given by a polynomial.

\begin{theorem}\label{Pv2}
Let $S\subset [n-1]$ be an admissible set. If $S= \emptyset$ take $m=1$; otherwise let $m=\max(S)$. Fix an integer $k$ satisfying
$2\leq k\leq n$, then the cardinality of the set $\ATK$ is given by
\[\lvert \ATK \rvert= p_{\a(k)}(n) \] where $p_{\a(k)}(n)$ is a polynomial of degree $m-1$ that returns integer values for
all integers $n$.
\end{theorem}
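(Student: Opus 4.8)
The plan is to run an induction on the statistic $\t=\sum_{i\in S}i$ that parallels the proof of Theorem~\ref{Pv1}, using Lemma~\ref{ascenttok} in place of Lemma~\ref{descent}. For the base case $\t=0$ the set $S$ is empty and $m=1$; Lemma~\ref{powerof2} gives $\lvert P(\emptyset;n)^{\nearrow k}\rvert=2^{k-2}$ for every $k$ with $2\le k\le n$, which is a constant (hence integer-valued) polynomial in $n$ of degree $0=m-1$, so the claim holds.

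For the inductive step, let $S\subset[n-1]$ be nonempty and admissible with $m=\max(S)$, and put $S_1=S\setminus\{m\}$ and $S_2=S_1\cup\{m-1\}$, exactly as in Lemma~\ref{ascenttok}. Removing $m$ and possibly adjoining the smaller element $m-1$ strictly decreases the statistic, so $\sum_{i\in S_1}i$ and $\sum_{i\in S_2}i$ are both less than $\t$; by the induction hypothesis $\lvert P(S_1;n)^{\nearrow k}\rvert$ and $\lvert P(S_2;n)^{\nearrow k}\rvert$ are integer-valued polynomials in $n$ of degree at most $m-2$ (with the zero polynomial covering the case where $S_2$ is not admissible). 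Feeding these into Lemma~\ref{ascenttok}, I analyze
\[
\lvert \ATK \rvert = \sum_{i=0}^{k-2}\binom{k-1}{i}\binom{n-k}{m-i-1}\lvert P(S_1;m-1)\rvert\,2^{k-i-2}\;-\;\lvert P(S_1;n)^{\nearrow k}\rvert\;-\;\lvert P(S_2;n)^{\nearrow k}\rvert .
\]
Here $\lvert P(S_1;m-1)\rvert$ is independent of $n$ and, by Equation~\eqref{polyeq}, equals the integer $p(m-1)2^{(m-1)-|S_1|-1}$. For each fixed $i$ the binomial $\binom{n-k}{m-i-1}$ is a polynomial in $n$ of degree $m-i-1$ when $m-i-1\ge 0$ and is identically zero otherwise; in particular the $i=0$ summand has degree $m-1$ and every other summand has degree at most $m-2$. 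Consequently the first sum is an integer-valued polynomial in $n$ of degree $m-1$, and subtracting the two polynomials of degree at most $m-2$ leaves an integer-valued polynomial $p_{\a(k)}(n)$ of degree $m-1$.

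The step that genuinely needs to be pinned down is that the degree is \emph{exactly} $m-1$ rather than merely at most $m-1$, i.e.\ that the leading coefficient $\frac{\lvert P(S_1;m-1)\rvert\,2^{k-2}}{(m-1)!}$ of the $i=0$ term does not vanish. The factor $2^{k-2}$ is nonzero precisely because of the hypothesis $k\ge 2$ --- this is where that restriction is essential, since $P(S;n)^{\nearrow 1}=\emptyset$ would otherwise force the zero polynomial. And $\lvert P(S_1;m-1)\rvert\neq 0$ because $S_1$ is $(m-1)$-admissible: an admissible peak set contains no two consecutive integers (both $\pi_i>\pi_{i+1}$ and $\pi_i<\pi_{i+1}$ cannot hold), so $m-1\notin S$, which forces $\max(S_1)\le m-2$; since $S_1$ also has no two consecutive elements and, when nonempty, minimum at least $2$, it is $(m-1)$-admissible and $\lvert P(S_1;m-1)\rvert\ge 1$. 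With this in hand the induction closes, and, as in Billey--Burdzy--Sagan, the promised closed forms can then be recovered from $p_{\a(k)}(n)$ by the method of finite differences.
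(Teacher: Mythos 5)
Your proof is correct and follows essentially the same route as the paper: induction on $\t=\sum_{i\in S}i$, base case via Lemma \ref{powerof2}, and the inductive step by feeding the induction hypothesis and the constancy of $\lvert P(S_1;m-1)\rvert$ into Lemma \ref{ascenttok}, with the $i=0$ summand supplying the degree-$(m-1)$ term. Your closing paragraph verifying that the leading coefficient is nonzero (via the $(m-1)$-admissibility of $S_1$) is a detail the paper leaves implicit, and it is a worthwhile addition.
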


\begin{proof}
We proceed by induction on the sum $\t = \sum_{i \in S} i$.    When $\t=0$ the set $S$ is empty. By Lemma  \ref{powerof2} we get $|P(\emptyset;n)^{{\nearrow k}} |= 2^{k-2}$, which is a polynomial of degree $0$ in the indeterminate $n$. 


 Consider a non-empty subset $S\subset [n-1]$ with $m=\max(S)$ and $\sum_{i \in S} i = \t$.
 Let $S_1=S\setminus\{m\}$ and $S_2=S_1\cup\{m-1\}$, and
 note in particular, that the sums $\sum_{i \in S_1} i $ and $\sum_{i \in S_2} i$ are both strictly less than $\t$. By induction, we know that $| P(S_1;n )^{{\nearrow k}} | =p_{\a_1(k)}(n)$ and
$|P(S_2;n)^{{\nearrow k}}|=p_{\a_2(k)}(n)$
where $p_{\a_1(k)}(n)$ and $p_{\a_2(k)}(n)$ are each polynomials of degrees less than $m-1$ that have integer values when
evaluated at integers.

{By Equation \eqref{polyeq} we know $|P(S_1;m-1)|=p(m-1)2^{(m-1)-|S_1|-1}$ is an integer-valued function when
evaluated at any integer $m-1$}, and it is a constant function with respect to $n$.
Hence the expression $\binom{k-1}{i}  \lvert P(S_1;m-1)\rvert  2^{k-i-2}$ is a polynomial expression in $n$ that has degree $m-1$ when $i =0$ and degree less than or equal to $ m-1$ for $1 \leq i \leq k-2$. These facts together with Lemma \ref{ascenttok} imply
	\begin{align*}
		\lvert \ATK \rvert&= \sum_{i=0}^{k-2} \binom{k-1}{i} \binom{n-k}{m-i-1} \lvert P(S_1;m-1)\rvert  2^{k-i-2} -\lvert P(S_1;n)^{{\nearrow k}}\rvert-\lvert P(S_2;n)^{{\nearrow k}}\rvert.\\
				&= \sum_{i=0}^{k-2} \binom{k-1}{i} \binom{n-k}{m-i-1} \lvert P(S_1;m-1)\rvert 2^{k-i-2} - p_{\a_1(k)}(n)-p_{\a_2(k)}(n)\\
				&= p_{\a(k)}(n)
	\end{align*}
 is a polynomial in the variable $n$ of degree $m-1$ that returns integer values when evaluated at integers.
\end{proof}

Below we show an example for how to find the polynomial $p_{\a(k)}(n)$.
\begin{example}
It is well known that any sequence given by a polynomial of degree $d$ can be completely determined by any consecutive $d+1$
values
by the method of finite differences
\cite[Proposition 1.9.2]{S12}.
Theorems \ref{Pv1} and \ref{Pv2} give us a way of finding explicit formulas $p_{\a(k)}(n)$ and $p_\d(n)$ for an admissible set
$S$.

For instance if $S=\{2,4\}$ and $k =6$,  Theorem \ref{Pv2} tells us $p_{\a(k)}(n)$ is a polynomial of degree three. Hence we
require four consecutive terms to compute $p_{\a(k)}(n)$.
One can compute that the first few values of $ p_{\a(6)}(n) = |P(S;n)^{{\nearrow 6}}|$ are as follows:
\[ p_{\a(6)}(6) =  16, \  p_{\a(6)}(7)=  80,  \ p_{\a(6)}(8) =  240, \ p_{\a(6)}(9) = 480, \ p_{\a(6)}(10)= 880, \ p_{\a(6)}(11)=
1456, \  \ldots  \]
We then take four successive differences until we get a row of zeros in the following array:
 \[\begin{matrix} 16 & 80 & 224 & 480 & 880 & 1456 & \cdots \\
                  64 & 144 & 256 & 400 & 576 & \cdots  & \\
                  80 & 112 & 144  & 176 & \cdots   &  & \\
                  32 & 32 & 32  &  \cdots  &    &  & \\
                  0 & 0 &  \cdots &      &    &  &  \end{matrix} \]
The polynomial $p_{\a(6)}(n)$ can be expressed in the basis $\binom{n} {j}$ as \[ p_{\a(6)}(n) = 16 \binom{n}{6} + 64
\binom{n}{7}
+ 80 \binom{n}{8} + 32 \binom{n}{9} . \]
We note that the sequence given by
$p_{\a(6)}(n)/16$ in this example is sequence \textcolor{blue}{\href{http://oeis.org/A000330}{A000330}} in \emph{Sloane's Online
Encyclopedia of Integer Sequences (OEIS)} with the index
$n$ shifted by $6$ \cite[A000330]{OEIS}.
\end{example}

\section{Pattern bundles of Coxeter groups of type \texorpdfstring{$C$}{C} and \texorpdfstring{$D$}{D}}\label{partitionbd}
In this section, we describe embeddings of the Coxeter groups of type $C_n$ and $D_n$ into the symmetric group $\S_{2n}$.
We then partition these groups into subsets, which we call \emph{pattern bundles} and denote by $\B_n(\pi)$ and $\D_n(\pi)$, that
correspond to permutations $\pi$ of $\S_n$.
Each of the \emph{type $C_n$ pattern bundles}  $\B_n(\pi)$  contain $2^n$ elements, and the \emph{type $D_n$ pattern bundles}
$\D_n(\pi)$ contain  $2^{n-1}$ elements.
These sets allow us to give a concise proof of Theorem \ref{HIPB},
and they play an instrumental role in our proof of Theorem \ref{key}.
\subsection{Pattern Bundle Algorithms for $\B_n$ and $\D_n$}
We define the group of type $C_n$ \emph{mirrored permutations} to be the subgroup
$\B_n \subset \S_{2n}$ consisting of all permutations $\pi_1\pi_2\cdots\pi_n \vert  \pi_{n+1}\pi_{n+2}\cdots\pi_{2n} \in \S_{2n}$
where $\pi_i = k$  if and only if $\pi_{2n-i+1}=2n-k+1$.
In other words, we place a ``mirror'' between $\pi_n$ and $\pi_{n+1}$, then the numbers $i$ and $2n-i+1$ must be the same
distance
from the mirror for each $1 \leq i \leq n$.
A simple transposition $s_i$ with $1 \leq i \leq n-1$ acts on a mirrored permutation $\pi \in \B_n \subset \S_{2n}$ (on the
right)
by simultaneously transposing $\pi_i$ with $\pi_{i+1}$ and $\pi_{2n-i}$ with $\pi_{2n-i+1}$.
The simple transposition $s_n$ acts on a mirrored permutation $\pi \in \B_n \subset \S_{2n}$ by transposing $\pi_n$ with
$\pi_{n+1}$.

Similarly, we define the group of type $D_n$ \emph{mirrored permutations} as the subgroup
$\D_n \subset \S_{2n}$ consisting of all permutations $\pi_1\pi_2\cdots\pi_n \vert  \pi_{n+1}\pi_{n+2}\cdots\pi_{2n} \in \S_{2n}$
where $\pi_i = k$  if and only if $\pi_{2n-i+1}=2n-k+1$ and the set $\{ \pi_1,\pi_2, \cdots, \pi_n \}$ always contains an even
number of elements from the set $\{ n+1, n+2, \ldots, 2n\}$.
A simple transposition $s_i$ with $1 \leq i \leq n-1$ acts on a mirrored permutation $\pi \in \D_n \subset \S_{2n}$ (on the
right)
by simultaneously transposing $\pi_i$ with $\pi_{i+1}$ and $\pi_{2n-i}$ with $\pi_{2n-i+1}$.
The simple transposition $s_n$ acts on a mirrored permutation $\pi \in \D_n \subset S_{2n}$ by transposing $\pi_{n-1}\pi_n$ with
$\pi_{n+1}\pi_{n+2}$.

\begin{definition}\label{relorder} Let $\pi=\pi_1\pi_2\cdots\pi_n\in \S_n$.
We define the \emph{pattern bundle of $\pi$} in type $C_n$ (denoted $\B_n(\pi)$) to be the set of all mirrored permutations
\[\tau=\tau_1\tau_2\cdots\tau_n\vert \tau_{n+1}\tau_{n+2}\cdots\tau_{2n}\in \B_n\] such that $\tau_1\tau_2\cdots\tau_n$ has the
same relative order as $\pi_1\pi_2\cdots\pi_n$.
\end{definition}

We could equivalently describe $\B_n(\pi)$ as the set of mirrored permutations which contain the \emph{permutation pattern} $\pi$
in the first $n$ entries.
 We will show that these sets partition $\B_n$ into subsets of size $2^n$.
For every $\pi\in\S_n$, we will describe how to construct the \emph{pattern bundle} $\B_n(\pi)\subset \B_n$ of $\pi$ using the
following process:

\noindent \begin{algorithm}[Pattern Bundle Algorithm for $\B_n(\pi)$]\label{algorithmB} \
\begin{enumerate}
\item Let $\pi=\pi_1\pi_2\cdots\pi_n \in\S_n$ and write it as a mirrored permutation \[\pi_1\pi_2\cdots\pi_n\vert
\pi_{n+1}\pi_{n+2}\cdots\pi_{2n}\in \S_{2n}.\]
\item Let $I_n=\{\pi_1,\pi_2,\ldots,\pi_n\}$. Fix $0\leq j\leq n$ and select $j$ elements from the set $I_n$. Then let $\Pi$ be
the set consisting of the $j$ selected elements. \label{fix}
\item The set $I_n \setminus \Pi$ consists of $n-j$ elements. Denote this subset of $I_n$ by $\Pi^c$.

\item Let $\overline{\Pi^c}$ denote the set containing $\pi_{2n-i_k+1} = 2n - \pi_{i_{k}}+1$, for each $\pi_{i_k} \in \Pi^c$.
Note
that $|\overline{\Pi^c}|=n-j$.\label{switch}
\item List the $n$ elements of the set \[ \overline{\Pi^c}
\sqcup \Pi \] so that they are in the same relative order as $\pi$ and call them $\tau_1 \tau_2 \cdots
\tau_n$.\label{samerelorder}
(Note that the set $\overline{\Pi^c}$ consists of the integers that were switched in Step \ref{switch}, and the set $\Pi$
consists
of the ones that were fixed in Step \ref{fix}.)
 \item The order of the remaining entries $\tau_{n+1} \tau_{n+2} \cdots \tau_{2n}$ is determined by that of $\tau_1 \tau_2 \cdots
\tau_n$ since we must have $\tau_{2n-i+1} = 2n - \tau_{i}+1$ for $1 \leq i \leq n$.
\item {Output} the mirrored permutation
$\tau_1\tau_2\cdots\tau_n\vert\tau_{n+1}\tau_{n+2}\cdots\tau_{2n}\in\B_n\subset\S_{2n}$ {and stop}.
\end{enumerate}
\end{algorithm}

Step \ref{samerelorder} ensures all of the constructed elements will have the same relative order as $\pi$.  It follows that the
set $\B_n(\pi)$ described in
Definition \ref{relorder} denotes all elements of $\B_n$ created from $\pi$ by Algorithm \ref{algorithmB}.
Notice in Step \ref{fix}, we must choose $j$ values to fix.
When we let $j$ range from $0$ to $n$ we see that
the total number of elements in $\B_n(\pi)$ is given by
\[\binom{n}{0}+\binom{n}{1}+\cdots+\binom{n}{n-1}+\binom{n}{n}=2^n.\]
We conclude that $|  \B_n(\pi)|=2^n$ for all $\pi \in \S_n$.

Note that if $\tau = \tau_1 \tau_2 \cdots \tau_n \vert  \tau_{n+1} \tau_{n+2} \cdots \tau_{2n} \in \B_n$, then $\tau_1 \cdots
\tau_n$
has same relative order as exactly one element $\pi \in \S_n$.
It follows that if $\sigma$ and $\pi$ are distinct permutations of $\S_n$, then $\B_n(\sigma )\cap\B_n(\pi)=\emptyset$.
Therefore, this process creates all $2^n n!$ elements of $\B_n$.

\begin{example}\label{B_3}
Using Algorithm \ref{algorithmB}, we have partitioned the elements of $\B_3$ into the pattern bundles $\B_n(\pi)$.
 \[\B_3  =  \left \{
 \begin{matrix}  \mathbf{123\vert 456}  \\ 124\vert 356 \\ 135 \vert  246 \\ 145\vert  236   \\ 236\vert  145 \\ 246 \vert  135
\\
356\vert 124  \\  456\vert  123   \end{matrix} \ \
\begin{matrix} \mathbf{ 132\vert 546}  \\ 142\vert 536 \\ 153 \vert  426  \\ 154\vert  326 \\ 263\vert  415  \\ 264 \vert  315 \\
365\vert 214  \\  465\vert  213   \end{matrix}  \ \
 \begin{matrix}  \mathbf{213\vert 465}  \\ 214\vert 365  \\ 315 \vert  264 \\ 326\vert  154 \\ 415\vert  326  \\ 426 \vert  153
\\
536\vert 142  \\  546\vert  132  \end{matrix}  \ \
\begin{matrix} \mathbf{ 231\vert 645 } \\ 241\vert 635 \\ 351 \vert  624 \\ 362\vert  514 \\ 451\vert  623  \\ 462 \vert  513 \\
563\vert 412  \\ 564\vert  312   \end{matrix}  \ \
\begin{matrix}  \mathbf{312\vert 564 } \\ 412\vert 563 \\ 513 \vert  462  \\ 514\vert  362 \\ 623\vert  451 \\ 624 \vert  351  \\
635\vert 241  \\ 645\vert  231   \end{matrix}   \ \
\begin{matrix}  \mathbf{321\vert 654}  \\ 421\vert 653 \\ 531 \vert  642  \\ 541\vert  632  \\ 632\vert  541\\ 642 \vert  531 \\
653\vert 421  \\  654\vert  321   \end{matrix} \ \
 \right  \}  \]
 One can see that
the elements of $\pi \in \S_3$ correspond to the elements in the top row (in bold font).
Each column consists of the pattern bundle $\B(\pi)$ corresponding to each $\pi\in\S_3$.

\end{example}

\begin{definition}\label{relorderD} Let $\pi=\pi_1\pi_2\cdots\pi_n\in \S_n$. We define the \emph{pattern bundle} $\D_n(\pi)$ to
be
the set of
all mirrored permutations $\tau=\tau_1\tau_2\cdots\tau_n\vert \tau_{n+1}\cdots\tau_{2n}\in \D_n$ such that
$\tau_1\tau_2\cdots\tau_n$ has the same
relative order as $\pi_1\pi_2\cdots\pi_n$.
\end{definition}

For every $\pi\in\S_n$, we construct the subsets $\D_n(\pi)\subset\D_n$ using the following process:

\noindent \begin{algorithm}[Pattern Bundle Algorithm for $\D_n(\pi)$]\label{algorithmD} \
\begin{enumerate}
\item Let $\pi=\pi_1\pi_2\cdots\pi_n \in\S_n$ and write it as a mirrored permutation \[ \pi_1\pi_2\cdots\pi_n\vert
\pi_{n+1}\pi_{n+2}\cdots\pi_{2n}\in \S_{2n}. \]
\item If $n$ is even then proceed to Step \ref{fixeven}, if it is odd proceed to Step \ref{fixodd}.

\noindent\textbf{Even}
\item If $n$ is even, then pick an even number $2j$, with $0\leq j\leq \frac{n}{2} $. Select a subset of $2j$ elements from the
set
$\{\pi_1,\pi_2,\ldots,\pi_n\}$ to keep fixed. Then let $\Pi$ be the set consisting of the $2j$ selected elements. \label{fixeven}
\item Let the set of remaining $n-2j$ elements be $\Pi^c$. (Note that $n-2j$ is an even integer.)
\item Let the set $\overline{\Pi^c}$ denote the set of mirror images from the elements of $\Pi^c$.
In other words, for each $\pi_{i_k}\in \Pi^c$ the mirror image $\pi_{2n-i_k+1} \in \overline{ \Pi^c}$.
\item List elements of the set $\Pi \sqcup \overline{\Pi^c}$
so they are in the same relative order as $\pi$ and call the resulting permutation $\tau_1 \tau_2 \cdots
\tau_n$.\label{samerelorderD}
\item Then the entries of $\tau_{n+1}\tau_{n+2} \cdots \tau_{2n}$ are determined by the relation $\tau_{2n-i_k+1} = 2n-
\tau_{i_k}+1$.
\item {Output} the mirrored permutation
$\tau_1\tau_2\cdots\tau_n\vert\tau_{n+1}\tau_{n+2}\cdots\tau_{2n}\in\D_n\subset\S_{2n}$ {and stop}.

\noindent\textbf{Odd}
\item If $n$ is odd, then pick an odd number $2j+1$ with $1\leq j\leq \frac{n-1}{2}$.   Select a subset of $2j+1$ elements from
the set $\{\pi_1,\pi_2,\ldots,\pi_n\}$
to keep fixed.
Call the set of these $2j+1$ elements $\Pi$. \label{fixodd}
\item Let the set of remaining $n-2j-1$ elements be denoted as $\Pi^c$. (Note that $n-2j-1$ is an even integer.)
\item Let the set $\overline{\Pi^c}$ denote the set of mirror images from the elements of $\Pi^c$.
In other words, for each $\pi_{i_k}\in \Pi^c$ the mirror image $\pi_{2n-i_k+1} \in \overline{ \Pi^c}$.
\item List elements of the set $\Pi \sqcup \overline{\Pi^c}$
so they are in the same relative order as $\pi$ and call the resulting permutation $\tau_1 \tau_2 \cdots \tau_n$. \label{step12}
\item Then the entries of $\tau_{n+1}\tau_{n+2} \cdots \tau_{2n}$ are determined by the relation $\tau_{2n-i_k+1} = 2n-
\tau_{i_k}+1$.
\item {Output} the mirrored permutation
$\tau_1\tau_2\cdots\tau_n\vert\tau_{n+1}\tau_{n+2}\cdots\tau_{2n}\in\D_n\subset\S_{2n}$ {and stop}.
\end{enumerate}
\end{algorithm}

By Definition \ref{relorderD} the set $\D_n(\pi)$ is the subset of all elements of $\D_n$ which are created from $\pi$ by
Algorithm \ref{algorithmD}.
This is because Steps \ref{samerelorderD} and \ref{step12} ensure that all of the constructed elements will have the same
relative
order as $\pi$.

In Steps \ref{fixeven} and \ref{fixodd} we choose an even/odd number of entries to fix, so that we always exchange an even number
of entries with their mirror image. This ensures each
$\tau$ constructed via Algorithm \ref{algorithmD} is a type $\D_n$ mirrored permutation.
When $n$ is even we can see from Step \ref{fixeven} that the total number of permutations created by Algorithm \ref{algorithmD}
is
given by $\sum_{j =0 }^{\frac{n}{2}}\binom{n}{2j}$. When $n$ is odd we can use the identity
\[\displaystyle\sum_{j =0 }^{\lfloor\frac{n}{2}\rfloor}\binom{n}{2j+1} = \sum_{k=0}^{\lfloor\frac{n}{2}\rfloor} \binom{n}{2k}
\text{ where } 2k=n-(2j+1) \]
 to see that the total number of elements created by Algorithm \ref{algorithmD} is also given by the formula
\[\displaystyle\sum_{j =0 }^{\lfloor\frac{n}{2}\rfloor}\binom{n}{2j}.\]

Pascal's identity for computing binomial coefficients states that for all integers $n$ and $k$ with $1\leq k \leq n-1$
\[\binom{n}{k}=\binom{n-1}{k-1}+\binom{n-1}{k} .\]
Using this identity we can see that
\[\displaystyle\sum_{j =0 }^{\lfloor\frac{n}{2}\rfloor}\binom{n}{2j}=\displaystyle\sum_{j=0}^{n-1}\binom{n-1}{j}=2^{n-1}.\]
So for every element $\pi\in\S_n$ we create $2^{n-1}$ elements of $\D_n$. Hence $\lvert  \D_n(\pi)\rvert=2^{n-1}$.
Also notice that for each choice of $\pi$ the $2^{n-1}$ elements of $\D_n(\pi)$ will be distinct due to the choice of which
elements get sent to their mirror image.
Namely, if $\sigma$ and $\pi$ are distinct permutations of $\S_n$, then $\D_n(\sigma)\cap\D_n(\pi)=\emptyset$.
Therefore, this process creates all $2^{n-1}n!$ distinct elements of $\D_n$.

\begin{example}Using Algorithm \ref{algorithmD}, we have partitioned the set $\D_3$ into the pattern bundles $\D_n(\pi)$.

 \[\D_3  =  \left \{\begin{matrix}  \mathbf{123\vert 456}  \\  145\vert  236  \\ 246 \vert  135 \\ 356\vert 124   \end{matrix} \ \
\begin{matrix} \mathbf{ 132\vert 546}  \\  154\vert  326  \\ 264 \vert  315 \\ 365\vert 214     \end{matrix}  \ \
 \begin{matrix}  \mathbf{213\vert 465}  \\ 214\vert 365  \\ 426 \vert  153 \\ 536\vert 142  \end{matrix}  \ \
\begin{matrix} \mathbf{ 231\vert 645 } \\ 241\vert 635 \\ 462 \vert  513 \\ 563\vert 412   \end{matrix}  \ \
\begin{matrix}  \mathbf{312\vert 564 } \\ 412\vert 563 \\  624 \vert  351  \\ 635\vert 241  \end{matrix}   \ \
\begin{matrix}  \mathbf{321\vert 654}  \\ 421\vert 653  \\ 642 \vert  531 \\ 653\vert 421  \end{matrix} \ \
 \right  \} . \]
Note that the elements in the top row (in bold font) are the elements of $\S_3$,
while the elements in each column are the elements of the pattern bundle $\D_3(\pi)$, for each respective $\pi\in\S_3.$
\end{example}

\subsection{Peak sets in types \texorpdfstring{$B$}{B} and \texorpdfstring{$C$}{C}} \label{comparison}
Castro-Velez et al. \cite{CV14} studied the sets of type $\BS_n$ \emph{signed permutations} (defined below) with a given peak set
$\STT \subset [n-1]$.
It is well-known that the group of signed permutations of type $\BS_n$ is isomorphic to the Coxeter groups of type $B_n$ and
$C_n$.
In this section, we describe one such isomorphism between the group of signed permutations $\BS_n$ and
the mirrored permutations $\B_n$ and {show how} the peak sets in mirrored permutations studied in this paper
correspond with the ones
studied by Castrol-Velez et al. \cite{CV14}.
It is important to know that even though we compute the cardinalities of similar sets, our methods are completely different and
yield different equations.
In particular, Castro-Velez et al. use induction arguments similar to those used by Billey, Burdzy, and Sagan in the realm of
signed permutations to derive their formulas, whereas we
use the pattern bundles of type $\B_n$ to {reduce} the problem to calculations in the symmetric group.

Let $\BS_n$ denote the group of \emph{signed permutations} on $n$ letters
\[\BS_n := \left \{\b_1\b_2\cdots \b_n \, \mid \, \b_i \in \{ -n,-n+1, \ldots,-1,1, \ldots, n \} \text{ and } \{|\b_1|,|\b_2|,
\ldots,
|\b_n|\} = [n] \right  \}  .\]
We say that a signed permutation $\beta \in \BS_n$ has a peak at $i$ if $\b_{i-1}< \b_i > \b_{i+1}$.

\begin{definition}
 Let $\ST \subseteq [n-1]$ then the sets $P_B(\ST;n)$ and $\hat{P}_B(\STT;n)$ are defined to be
\begin{align*}
P_B(\ST;n) &:=\{\b \in \BS_n \, \mid \, P(\b_1\cdots\b_n)=\ST \}  \\
\hat{P}_B(\STT;n) &:=\{\b \in \BS_n \, \mid \, P(\b_0\b_1\cdots\b_n)=\STT, \text{ where } \b_0=0 \}.
\end{align*}
\end{definition}
\noindent
In this paper we study the sets of mirrored permutations of type $C_n$ and $D_n$ that have a given peak set $S$.
\begin{definition}\label{defpeakset} Let $\B_n$ and $\D_n$ be the mirrored permutations of types $C$ and $D$, respectively.
For $S \subseteq [n-1]$ we define the sets $P_C(S;n)$ and $P_D(S;n)$ as
\begin{align}
P_C(S;n) &:=\{\pi\in \B_n \, \mid \, P(\pi_1\pi_2\cdots\pi_n)=S\},\\
P_D(S;n)&:=\{\pi\in\D_n \, \mid \, P(\pi_1\pi_2 \cdots  \pi_n)=S\}.
\end{align}
Let $S \subseteq [n]$ we define the sets $\hat{P}_C(S;n)$ and $\hat{P}_D(S;n)$ as
\begin{align}
\hat{P}_C(S;n)&:=\{\pi\in\B_n \, \mid \, P(\pi_1\pi_2\cdots \pi_n|\pi_{n+1})=S\},\\
\hat{P}_D(S;n)&:=\{\pi\in\D_n \, \mid \, P(\pi_1\pi_2\cdots\pi_n|\pi_{n+1})=S\}.
\end{align}
\end{definition}
Note that $\hat{P}_C(S;n)$ and $ \hat{P}_D(S;n) $ differ from $P_C(S;n)$ and $P_D(S;n)$ in that they allow for a peak in the
$n\text{th}$ position when $\pi_{n-1} < \pi_n > \pi_{n+1}$.
{
The following proposition provides a bijection between the peak sets $\hat{P}_B(\STT;n)$ considered by Castro-Velez et al. \cite{CV14} and
$\hat{P}_C(S;n)$ considered in this paper.
\begin{proposition}\label{prop:compare1}
Let $S = \{ i_1, i_2, \ldots, i_k \}\subset \{2,3,\ldots, n\}$ and \[ \STT =\{ n-i_1+1, n-i_2+1, \ldots, n-i_k+1 \}\subset [n-1],
\] then  there is a bijection between $\B_n$ and $ \BS_n$ that maps $\hat{P}_C(S;n)$ to $\hat{P}_B(\STT;n)$.
\end{proposition}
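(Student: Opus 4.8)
The plan is to write down one explicit bijection $f\colon\B_n\to\BS_n$ and then check that it carries $\hat{P}_C(S;n)$ onto $\hat{P}_B(\STT;n)$ by tracking what happens to peaks. The first step is to set up an ``alphabet dictionary.'' Let $\phi$ be the unique order-preserving bijection from $\{-n,\dots,-1\}\cup\{1,\dots,n\}$ onto $[2n]$; concretely $\phi(a)=a+n$ if $a>0$ and $\phi(a)=a+n+1$ if $a<0$. Only two features of $\phi$ will be used: both $\phi$ and $\phi^{-1}$ preserve order, and $\phi(-a)=2n+1-\phi(a)$, so that the sign change $a\mapsto -a$ corresponds under $\phi$ to the mirror involution $v\mapsto 2n+1-v$ on $[2n]$.

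Next I would define $f$ by reversing and translating the first half of a mirrored permutation: for $\pi=\pi_1\pi_2\cdots\pi_n\mid\pi_{n+1}\cdots\pi_{2n}\in\B_n$ put $f(\pi)=\beta$ where $\beta_j:=\phi^{-1}(\pi_{n+1-j})$ for $1\le j\le n$. Since the first half $\pi_1,\dots,\pi_n$ of a mirrored permutation is a system of representatives for the mirror pairs $\{k,2n+1-k\}$, $1\le k\le n$, the numbers $|\beta_1|,\dots,|\beta_n|$ run over $[n]$, so $\beta\in\BS_n$; and $\pi$ is reconstructed from $\beta$ by $\pi_i=\phi(\beta_{n+1-i})$ for $i\le n$ followed by filling in the mirror images, so $f$ is a bijection of $\B_n$ onto $\BS_n$.

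The substance of the argument is the peak correspondence. Fix $\pi\in\B_n$ and $\beta=f(\pi)$, and consider the length-$(n+1)$ word $w=\pi_1\pi_2\cdots\pi_n\pi_{n+1}$. Reversing a word of length $m$ sends a peak at position $i$ to a peak at position $m+1-i$, so the reversal $w^{R}=\pi_{n+1}\pi_n\cdots\pi_1$ has peak set $\{n+2-i:i\in P(w)\}$; applying $\phi^{-1}$ letterwise changes no peaks, and $\phi^{-1}(w^{R})$ is the word $\phi^{-1}(\pi_{n+1}),\beta_1,\beta_2,\dots,\beta_n$, whose peak set, after re-indexing its positions as $0,1,\dots,n$, equals $\{n+1-i:i\in P(w)\}$. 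Finally I would compare this word with $\beta_0\beta_1\cdots\beta_n$ where $\beta_0=0$: they agree in positions $1,\dots,n$, and the mirror relation $\pi_{n+1}=2n+1-\pi_n$ together with $\phi^{-1}(2n+1-v)=-\phi^{-1}(v)$ gives $\phi^{-1}(\pi_{n+1})=-\beta_1$. The position-$0$ entry affects only whether position $1$ is a peak, and there — beyond the shared requirement $\beta_1>\beta_2$ — a peak needs the position-$0$ entry to be smaller than $\beta_1$; since $\beta_1\neq 0$, the inequality $-\beta_1<\beta_1$ holds exactly when $0<\beta_1$. Hence the two words have the same peak set, so $P(\beta_0\beta_1\cdots\beta_n)=\{n+1-i:i\in P(\pi_1\cdots\pi_n\mid\pi_{n+1})\}$. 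Since $i\mapsto n+1-i$ carries $S$ onto $\STT$, this is precisely the statement $\pi\in\hat{P}_C(S;n)\iff f(\pi)\in\hat{P}_B(\STT;n)$, so $f$ restricts to a bijection between these two sets.

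I expect the one genuinely nonroutine point to be recognizing why attaching $\pi_{n+1}$ to the end of the type-$C$ word behaves, after reversal, exactly like prepending the symbol $0$ to the type-$B$ word: this works because $\phi^{-1}(\pi_{n+1})=-\beta_1$ stands in the same order relation to $\beta_1$ as $0$ does. The rest is bookkeeping with the two position-index sets $\{1,\dots,n+1\}$ and $\{0,\dots,n\}$, which reverse into one another via $i\leftrightarrow n+1-i$ and so produce the reindexing $S\mapsto\STT$ of peak sets.
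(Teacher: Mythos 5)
Your proposal is correct and is essentially the paper's proof: your map $f$ coincides with the paper's composite $\G\circ\F$ (reversing the first half and applying the order-preserving identification with the signed alphabet is, by the mirror relation $\pi_{n+i}=2n+1-\pi_{n-i+1}$, the same as reading the second half and negating), and your peak-tracking via reversal plus the observation that $-\beta_1<\beta_1\iff 0<\beta_1$ is the same computation the paper performs when it checks that $\F$ respects the relative order of $\pi_n\pi_{n+1}\cdots\pi_{2n}$ with $\beta_0=0$ playing the role of $\pi_n$.
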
}

{The above result states that} the peaks of $\pi_1 \pi_2\cdots \pi_n|\pi_{n+1}$ correspond bijectively with the peaks
of a signed permutation $\beta_0\beta_1\beta_2 \cdots \beta_n$
where $\beta_0=0$, and the peaks of $\pi_1\pi_2\cdots \pi_n$
correspond with those of $\beta_1\beta_2 \cdots \beta_n$.
{Before proceeding to the proof of Proposition \ref{prop:compare1} we set some preliminaries.}

Billey and Lakshmibai note that a mirrored permutation
\[ \pi_1\pi_2\cdots\pi_n|\pi_{n+1}\pi_{n+2}\cdots \pi_{2n} \in \B_n \] can be represented by either sides of the mirror
$\pi_1\pi_2\cdots \pi_n$
or $\pi_{n+1}\pi_{n+2} \cdots \pi_{2n}$ \cite[Definition 8.3.2]{BL00}, and
we use the latter $\pi_{n+1}\pi_{n+2} \cdots \pi_{2n}$ to define a map $\F$ from $\B_n$ to $\BS_n$ as follows:
\begin{align*}
\F :  \B_n & \rightarrow \BS_n \\
                      \pi_1 \pi_2 \cdots \pi_n|\pi_{n+1}\pi_{n+2} \cdots \pi_{2n}&  \mapsto \b_1\b_2 \cdots \b_n
\end{align*} where \[ \b_i = \begin{cases} \pi_{n+i}-n & \text{ if } \pi_{n+i} > n \\ \pi_{n+i} -n-1 & \text{ if } \pi_{n+i} \leq
n. \end{cases}\]

{We} consider a signed permutation $\b=\b_1\b_2 \cdots \b_n$ in $\BS_n$ as $\b_0\b_1\cdots \b_n$ where $\b_0=0$,
thus allowing for a peak at position 1.
We note that the map $\F$ respects the relative order of the sequence $\pi_n\pi_{n+1}\pi_{n+2} \cdots \pi_{2n}$ i.e., for $0\leq
i\leq n-1$ if $\pi_{n+i}<\pi_{n+i+1}$ then $\b_i<\b_{i+1}$,
and similarly if $\pi_{n+i}>\pi_{n+i+1}$ then $\b_i>\b_{i+1}.$

We also define an automorphism $\G: \BS_n \rightarrow \BS_n$ which switches the sign of each $\b_i$ in $\b_0\b_1\b_2\cdots \b_n$
(keeping $\b_0=0$ fixed). To avoid cumbersome notation, for each
$\b_i$ we set $\overline{\b_i}=-\b_i$.
The following table illustrates how the maps $\F$ and $\G$ map the group of mirrored permutations $\B_2$ bijectively to the group
of signed permutations $\BS_2$.

\begin{center}
\begin{tabular}{|c|c|c|}  \hline
  $\pi \in \B_2$ &  $\F(\pi) \in \BS_2$ & $\G(\F(\pi)) \in \BS_2$ \\ \hline
   & &\\
  $12\vert 34$ & $012$ & $0\overline{1}\overline{2}$ \\
  $21\vert 43$ & $021$ & $0\overline{2}\overline{1}$ \\
  $13\vert 24$ & $0\overline{1} 2$ & $01 \overline{2}$\\
  $24\vert 13$  & $0\overline{2} 1$ & $02\overline{1} $    \\
  $31\vert 42$  & $0 2 \overline{1}$ & $0\overline{2}1$     \\
  $42\vert 31$  & $01\overline{2}$ & $0\overline{1}2$    \\
   $34\vert 12$  &$0\overline{21} $ & $021$  \\
   $43\vert 21$  & $0\overline{12}$ & $012$    \\
    && \\ \hline
\end{tabular}
\end{center}

{With the above notation at hand we now proceed to the proof.}

\begin{proof}[Proof of Proposition \ref{prop:compare1}]
 Let $\pi=\pi_1\pi_2\cdots \pi_n| \pi_{n+1} \cdots \pi_{2n}$ be a mirrored permutation and $\F(\pi)=
\b_0\b_1\b_2\cdots \b_n$. Then we see $\G(\F(\pi))=\b_0\overline{\b_1} \overline{\b_2}\cdots \overline{\b_n}$.
 Suppose $\pi_i<\pi_{i+1}$ for some $i\in \{1,2,\ldots, n\}.$ Looking at the mirror images of $\pi_i$ and $\pi_{i+1}$ we get
$2n-\pi_i+1>2n-\pi_{i+1}+1$, thus $\pi_{2n-i+1}>\pi_{2n-(i+1)+1}$.
 Since the map $\F$ respects the relative order of $\pi_n \pi_{n+1} \cdots \pi_{2n}$ we have $ \b_{n-i+1}>\b_{n-(i+1)+1},$ and
thus $
 \overline{\b_{n-i+1}}<\overline{\b_{n-(i+1)+1}}.$
 Using the same argument but replacing ``$<$'' with ``$>$'' and vice-versa we get that if $\pi_i>\pi_{i+1}$ then
$\overline{\b_{n-i+1}}>\overline{\b_{n-(i+1)+1}} .$
 Therefore if $\pi \in \hat{P}_C(S;n)$ then $\G(\F(\pi))\in \hat{P}_B(\STT;n)$,
 and if $\pi \not\in \hat{P}_C(S;n)$ then $\G(\F(\pi))\not\in \hat{P}_B(\STT;n)$.
 Since both $\G$ and $\F$ are bijections we conclude that $\G(\F( \hat{P}_C(S;n)))=\hat{P}_B(\STT;n).$
 \end{proof}

We can also consider signed permutations $\b \in \BS_n$ without the convention that $\b_0=0$. In that case we obtain the
following
result.

\begin{corollary}
Let $S = \{ i_1, i_2, \ldots, i_k \}\subset \{2,3,\ldots, n-1\}$ and \[ \ST =\{ n-i_1+1, n-i_2+1, \ldots, n-i_k+1
\}\subset\{2,\ldots, n-1\},\]
then the bijection $\G \circ \F : \B_n \rightarrow \BS_n$ maps $P_C(S;n)$ to $P_B(\ST;n)$.
\end{corollary}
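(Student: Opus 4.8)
The plan is to rerun the order-tracking argument from the proof of Proposition \ref{prop:compare1}, now being careful at the endpoints since we no longer prepend $\b_0 = 0$. Since $\G\circ\F$ is already known to be a bijection $\B_n \to \BS_n$, it suffices to show that for every $\pi = \pi_1\cdots\pi_n\vert\pi_{n+1}\cdots\pi_{2n} \in \B_n$, writing $\G(\F(\pi)) = \overline{\b_1}\,\overline{\b_2}\cdots\overline{\b_n}$ (with the fixed entry $\overline{\b_0} = 0$ now discarded), one has $i \in P(\pi_1\cdots\pi_n)$ if and only if $n - i + 1 \in P(\overline{\b_1}\cdots\overline{\b_n})$. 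Granting this, for $S \subset \{2,\ldots,n-1\}$ we get $P(\pi_1\cdots\pi_n) = S$ precisely when $P(\overline{\b_1}\cdots\overline{\b_n}) = \{\,n-i+1 : i \in S\,\} = \ST$, so $\G\circ\F$ carries $P_C(S;n)$ bijectively onto $P_B(\ST;n)$.

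First I would establish the position-level dictionary. Fix $a$ with $1 \leq a \leq n-1$. The mirror identity $\pi_{2n-j+1} = 2n - \pi_j + 1$ gives $\pi_a < \pi_{a+1} \iff \pi_{2n-a} < \pi_{2n-a+1}$; since $\pi_{2n-a} = \pi_{n+(n-a)}$ and $\pi_{2n-a+1} = \pi_{n+(n-a+1)}$ and $\F$ respects the relative order of $\pi_n\pi_{n+1}\cdots\pi_{2n}$, this is equivalent to $\b_{n-a} < \b_{n-a+1}$, hence to $\overline{\b_{n-a}} > \overline{\b_{n-a+1}}$ after applying $\G$. In words: an ascent of $\pi_1\cdots\pi_n$ at position $a$ is the same as a descent of $\overline{\b_1}\cdots\overline{\b_n}$ at position $n-a$, and symmetrically a descent at $a$ is an ascent at $n-a$; crucially, for $a \in \{1,\ldots,n-1\}$ the indices $n-a$ and $n-a+1$ both lie in $\{1,\ldots,n\}$, so only the entries $\b_1,\ldots,\b_n$ --- never $\b_0$ --- are ever invoked.

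Then I would assemble the peak correspondence: a peak of $\pi_1\cdots\pi_n$ at $i$ (so $2 \leq i \leq n-1$) is an ascent at $i-1$ together with a descent at $i$, which by the dictionary becomes a descent at $n-i+1$ together with an ascent at $n-i$ of $\overline{\b_1}\cdots\overline{\b_n}$, i.e.\ $\overline{\b_{n-i}} < \overline{\b_{n-i+1}} > \overline{\b_{n-i+2}}$ --- exactly a peak at position $n - i + 1$; reading the equivalences in reverse gives the converse, and the displayed conclusion follows as above. The one delicate point, and the place a hasty argument would go wrong, is precisely this endpoint bookkeeping: in Proposition \ref{prop:compare1} the slot $\b_0 = 0$ absorbs a possible position-$1$ peak, whereas here we must verify that restricting $S$ to $\{2,\ldots,n-1\}$ (equivalently $\ST$ to $\{2,\ldots,n-1\}$) keeps every relation strictly among $\b_1,\ldots,\b_n$ and never references $\b_0$ or a nonexistent $\b_{n+1}$. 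Once that is checked the argument is otherwise line-for-line the one already given.
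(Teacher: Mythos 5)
Your proposal is correct and follows essentially the same route as the paper, which simply observes that the argument of Proposition \ref{prop:compare1} carries over verbatim; your position-level dictionary (ascent at $a$ for $\pi$ corresponds to descent at $n-a$ for $\G(\F(\pi))$) is exactly the computation in that proof. The extra endpoint bookkeeping you supply --- checking that for $S\subset\{2,\ldots,n-1\}$ no relation ever involves $\b_0$ or a nonexistent $\b_{n+1}$ --- is precisely the detail the paper leaves implicit, so this is a faithful (indeed slightly more careful) rendering of the intended proof.
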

\begin{proof}
The proof of this corollary proceeds exactly as the proof of Proposition \ref{prop:compare1}.
\end{proof}

\subsection{The sets \texorpdfstring{$P_C(S;n)$}{PC(S;n)} and \texorpdfstring{$P_D(S;n)$}{PD(S;n)}}
In this subsection, we use the fact that $\B_n(\pi)$ and $\D_n(\pi)$ partition $\B_n$ and $\D_n$ to give concise proofs that
$|P_C(S;n)|=p(n)2^{2n-|S|-1}$
and $| P_D(S;n)|  = p(n) 2^{2n-| S|-2}$ where $p(n)$ is the polynomial given in Theorem 2.2 of Billey, Burdzy, and Sagan's paper
\cite[Theorem 2.2]{BBS13}.

\begin{theorem}\label{HIPB}
Let $S \subseteq [n-1]$, then
	\begin{enumerate}[(I)]
		\item \label{HIPB1}$|P_C(S;n)|=p(n)2^{2n-|S|-1}$
		\item \label{HIPB2}$|P_D(S;n)|=p(n)2^{2n-|S|-2}$.
	\end{enumerate}

\end{theorem}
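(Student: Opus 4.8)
The plan is to exploit the pattern bundle decompositions $\B_n=\bigsqcup_{\pi\in\S_n}\B_n(\pi)$ and $\D_n=\bigsqcup_{\pi\in\S_n}\D_n(\pi)$ established in Section \ref{partitionbd}, together with the key observation that membership in $P_C(S;n)$ or $P_D(S;n)$ depends only on the relative order of $\tau_1\tau_2\cdots\tau_n$, not on the actual values. First I would note that if $\tau=\tau_1\tau_2\cdots\tau_n|\tau_{n+1}\cdots\tau_{2n}\in\B_n(\pi)$, then since $\tau_1\tau_2\cdots\tau_n$ flattens to $\pi$, we have $P(\tau_1\cdots\tau_n)=P(\pi)$. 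Hence $\B_n(\pi)\subseteq P_C(S;n)$ if $P(\pi)=S$, and $\B_n(\pi)\cap P_C(S;n)=\emptyset$ otherwise. Therefore
\[ |P_C(S;n)| = \sum_{\substack{\pi\in\S_n\\ P(\pi)=S}} |\B_n(\pi)| = |P(S;n)|\cdot 2^n, \]
where the last equality uses $|\B_n(\pi)|=2^n$ for every $\pi$ (proved via the binomial sum in Section \ref{partitionbd}) and the definition $|P(S;n)|=|\{\pi\in\S_n : P(\pi)=S\}|$.

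For part \eqref{HIPB1}, I would then invoke Equation \eqref{polyeq}, namely $|P(S;n)|=p(n)2^{n-|S|-1}$ from \cite[Theorem 2.2]{BBS13}, to conclude
\[ |P_C(S;n)| = p(n)2^{n-|S|-1}\cdot 2^n = p(n)2^{2n-|S|-1}. \]
Part \eqref{HIPB2} proceeds identically: the same relative-order argument gives $\D_n(\pi)\subseteq P_D(S;n)$ when $P(\pi)=S$ and disjointness otherwise, so $|P_D(S;n)|=|P(S;n)|\cdot 2^{n-1}=p(n)2^{n-|S|-1}\cdot 2^{n-1}=p(n)2^{2n-|S|-2}$, using $|\D_n(\pi)|=2^{n-1}$ from Algorithm \ref{algorithmD}. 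One small point worth addressing explicitly is the edge case $S=\emptyset$, where $|P(\emptyset;n)|=2^{n-1}$ (from \cite[Proposition 2.1]{BBS13}, also used in Lemma \ref{emptyalphadelta}) is consistent with the formula $p(n)2^{n-|S|-1}$ with $p(n)\equiv 1$.

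The main obstacle — really the only nontrivial point — is justifying that $P(\tau_1\cdots\tau_n)$ depends only on the flattening of $\tau_1\cdots\tau_n$. This is immediate once stated: a peak at index $i$ is the condition $\tau_{i-1}<\tau_i>\tau_{i+1}$, which is entirely a statement about the relative order of consecutive triples, so it is preserved under order-isomorphism of the word. I would state this as a one-line remark rather than a separate lemma. Everything else is bookkeeping: the partition property and the sizes $2^n$ and $2^{n-1}$ of the bundles were already carefully verified in Section \ref{partitionbd}, so the proof of Theorem \ref{HIPB} reduces to assembling these pieces and citing \eqref{polyeq}. This is exactly the promised ``concise proof'' that is shorter than the induction of \cite[Theorem 2.4]{CV14}.
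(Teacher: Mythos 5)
Your proposal is correct and follows essentially the same route as the paper: both use the pattern bundle partition, the observation that peaks depend only on the relative order of $\tau_1\cdots\tau_n$, the bundle sizes $2^n$ and $2^{n-1}$, and Equation \eqref{polyeq} to conclude. Your explicit treatment of the disjointness and of the $S=\emptyset$ edge case is slightly more detailed than the paper's write-up but adds nothing essentially new.
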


\begin{proof}
 To prove part ($\ref{HIPB1}$) note that Billey, Burdzy, and Sagan showed that $|P(S;n)| = p(n) 2^{n-|S|-1}$ where $p(n)$ is a polynomial with degree $\max(S)-1$,
\cite[Theorem 2.2]{BBS13}.
 Algorithm \ref{algorithmB} showed that each $\pi \in P(S;n)$ corresponds to a subset $\B_n(\pi) \subset \B_n$ which contains
$2^{n}$ elements.
 By construction these elements have the exact same peak set as $\pi$. In other words, for every $\tau\in \B_n(\pi)$ the peak
sets
$P(\tau)=P(\pi)=S$ agree.
 We compute that $ | P_C(S;n) | = p(n) 2^{n-|S|-1} 2^n = p(n) 2^{2n-|S|-1}$.

 Part (\ref{HIPB2}) follows similarly, replacing  $\B_n$ with $\D_n$, Algorithm \ref{algorithmB} with Algorithm \ref{algorithmD}, and $2^n$ with $2^{n-1}$.
 \end{proof}

%
%
%
%
%
%
%
\section{Peak sets of the Coxeter groups of type \texorpdfstring{$C$}{C} and \texorpdfstring{$D$}{D}} \label{typeCD}
{
In this section we use specific sums of binomial coefficients and the partitions \[P(S;n) = \cielo
\sqcup \infierno\mbox{ and }\cielo = \sqcup_{k =2}^n \ATK\] to describe
the cardinality
of the sets $\hat{P}_C(S;n)$, $\hat{P}_C( S \cup \{ n \}; n)$, $  \hat{P}_D(S;n)$, and $
\hat{P}_D(S\cup\{n\};n)$.
We begin by setting the following notation:}

 \begin{definition} \label{phipsi} Let $\Phi(n,k)$ denote the sum of the last $n-j+1 $ terms of the $n^{\text{th}}$ row in
Pascal's triangle:
 \[ \Phi(n,k) = \sum_{i=k}^n \binom{n}{i} = \binom{n} {k} + \binom{n}{k+1} +\cdots + \binom{n}{n} ,\]
 and let \[\Psi(n,k)=2^n-\Phi(n,k).\]
 \end{definition}

 {We can now state our main result.}

 \begin{theorem}\label{key}
 \underline{Type $C$}: Let $\hat{P}_C(S;n)$ denote the set of elements of $\B_n$ with peak set $S\subset [n-1]$, then
 \[ \lvert  \hat{P}_C(S;n) \rvert =  \displaystyle\sum_{k=1}^{n}\lvert  \ATK \rvert \cdot  \Phi(n,k) + \lvert
\infierno\rvert\cdot 2^n  \] \mbox{ and}
\[ \lvert   \hat{P}_C(S\cup\{n\};n) \rvert =\sum_{k=1}^n\lvert   \ATK \rvert\cdot\Psi(n,k). \]

\noindent
\underline{Type $D$}: Let $\hat{P}_D(S;n)$ denote the set of elements of $\D_n$ with peak set $S\subset  [n-1]$. If $n$ is even, then
\[ \lvert  \hat{P}_D(S;n)\rvert=\displaystyle\sum_{k=1}^{ \frac{n}{2} } \big(\lvert  P(S;n)^{{\nearrow
2k-1}}\rvert+\lvert
P(S;n)^{{\nearrow 2k}}\rvert\big)\Phi(n-1,2k-1)+\lvert  \infierno\rvert2^{n-1} \]   and
\[\lvert  \hat{P}_D(S\cup\{n\};n)\rvert=\displaystyle\sum_{k=1}^{ \frac{n}{2} }\big(\lvert  P(S;n)^{{\nearrow
2k-1}}\rvert+\lvert
P(S;n)^{{\nearrow 2k}}\rvert\big)\Psi(n-1,2k-1).\]
If $n$ is odd, then
\[ \lvert  \hat{P}_D(S;n)\rvert=\displaystyle\sum_{k=1}^{ \frac{n-1}{2}} \big(\lvert  P(S;n)^{{\nearrow
2k+1}}\rvert+\lvert
P(S;n)^{{\nearrow 2k}}\rvert\big)\Phi(n-1,2k)+\lvert  \infierno\rvert2^{n-1} \]   and
\[\lvert  \hat{P}_D(S\cup\{n\};n)\rvert=\displaystyle\sum_{k=1}^{\frac{n-1}{2}}\big(\lvert  P(S;n)^{{\nearrow
2k+1}}\rvert+\lvert
P(S;n)^{{\nearrow 2k}}\rvert\big)\Psi(n-1,2k).\]

\end{theorem}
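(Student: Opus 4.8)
The plan is to extend the pattern-bundle reduction of Section~\ref{partitionbd} by one position, keeping track of the entry $\pi_{n+1}$. The starting point is a purely local observation: for any mirrored permutation $\tau=\tau_1\cdots\tau_n\vert\tau_{n+1}\cdots\tau_{2n}$ in $\B_n$ or $\D_n$ we have $\tau_{n+1}=2n-\tau_n+1$, so the inequality $\tau_n>\tau_{n+1}$ holds exactly when $\tau_n\ge n+1$. Hence $n$ is a peak of $\tau_1\cdots\tau_n\vert\tau_{n+1}$ if and only if $\tau_{n-1}<\tau_n$ \emph{and} $\tau_n\ge n+1$, while every other possible peak of $\tau_1\cdots\tau_n\vert\tau_{n+1}$ lies in $\{2,\dots,n-1\}$ and is therefore detected by the relative order of $\tau_1\cdots\tau_n$ alone. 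Since position $n$ is never a peak of a length-$n$ permutation, it follows that if $\tau_1\cdots\tau_n$ flattens to $\sigma\in\S_n$ and $\tau$ belongs to $\hat P_C(S;n)$ or to $\hat P_C(S\cup\{n\};n)$, then $P(\sigma)=S$, so $\tau\in\B_n(\sigma)$ with $\sigma\in P(S;n)$; conversely, every $\tau\in\B_n(\sigma)$ with $\sigma\in P(S;n)$ lies in exactly one of $\hat P_C(S;n)$, $\hat P_C(S\cup\{n\};n)$, according to whether $n$ is a peak of $\tau_1\cdots\tau_n\vert\tau_{n+1}$. So I would split $P(S;n)=\cielo\sqcup\infierno$ and count the contribution of each bundle $\B_n(\sigma)$ separately.

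Next I would analyze a single type-$C$ bundle using Algorithm~\ref{algorithmB}. If $\sigma\in\infierno$, then $\tau_{n-1}>\tau_n$ for every $\tau\in\B_n(\sigma)$, so no such $\tau$ has a peak at $n$ and the whole bundle, of size $2^n$, lies in $\hat P_C(S;n)$; this produces the term $|\infierno|\cdot 2^n$. If instead $\sigma\in\ATK$ (so that, by Definition~\ref{alpha-delta}, $\sigma_{n-1}<\sigma_n$ and $\sigma_n$ is the $k$-th smallest of $\sigma_1,\dots,\sigma_n$), then a choice in Algorithm~\ref{algorithmB} of a $j$-element ``fixed'' subset $\Pi\subseteq[n]$ yields a $\tau$ with $\{\tau_1,\dots,\tau_n\}=\Pi\sqcup\overline{\Pi^c}$, where $\Pi$ contributes the $j$ smallest values (all $\le n$) and $\overline{\Pi^c}$ the $n-j$ largest (all $\ge n+1$). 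Since $\tau_1\cdots\tau_n$ has the same relative order as $\sigma$, the entry $\tau_n$ is the $k$-th smallest of $\{\tau_1,\dots,\tau_n\}$, and therefore $\tau_n\ge n+1$ exactly when $j\le k-1$. Hence among the $\sum_{j=0}^n\binom{n}{j}=2^n$ elements of $\B_n(\sigma)$ there are $\sum_{j=0}^{k-1}\binom{n}{j}=\Psi(n,k)$ with a peak at $n$, which lie in $\hat P_C(S\cup\{n\};n)$, and $\sum_{j=k}^n\binom{n}{j}=\Phi(n,k)$ without, which lie in $\hat P_C(S;n)$. Summing over $\sigma$, grouped by the value of $k$ for which $\sigma\in\ATK$, yields both type-$C$ identities; the $k=1$ term is harmless since $P(S;n)^{\nearrow 1}=\emptyset$.

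The type-$D$ argument is identical, with $\D_n(\sigma)$, $|\D_n(\sigma)|=2^{n-1}$, and Algorithm~\ref{algorithmD} replacing their type-$C$ analogues; the only new feature is that the size $j=|\Pi|$ of the fixed set is now forced to have the same parity as $n$, so that the number $n-j$ of mirror swaps is even, while the number of $\tau\in\D_n(\sigma)$ obtained from a given value of $j$ is still $\binom{n}{j}$. Thus for $\sigma\in P(S;n)^{\nearrow m}$ the number of $\tau\in\D_n(\sigma)$ \emph{without} a peak at $n$ is $\sum_{j\ge m,\ j\equiv n\,(2)}\binom{n}{j}$, and the number \emph{with} a peak at $n$ is the complement in $2^{n-1}$. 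The remaining step is a Pascal's-triangle computation: applying $\binom{n}{i}=\binom{n-1}{i-1}+\binom{n-1}{i}$, one checks that for $n$ even this restricted tail sum equals $\Phi(n-1,2k-1)$ whenever $m\in\{2k-1,2k\}$, and for $n$ odd it equals $\Phi(n-1,2k)$ whenever $m\in\{2k,2k+1\}$, the complementary counts being $\Psi(n-1,2k-1)$ and $\Psi(n-1,2k)$ respectively. This collapse is exactly why the statement groups the consecutive sets $P(S;n)^{\nearrow 2k-1},P(S;n)^{\nearrow 2k}$ (respectively $P(S;n)^{\nearrow 2k},P(S;n)^{\nearrow 2k+1}$) under a single coefficient. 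I expect this parity/Pascal bookkeeping, together with fixing the correct ranges of $k$ and handling the edge case $P(S;n)^{\nearrow 1}=\emptyset$, to be the only delicate point of the argument; everything else is a direct application of the pattern-bundle partitions of $\B_n$ and $\D_n$ established in Section~\ref{partitionbd}.
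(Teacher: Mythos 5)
Your proposal is correct and follows essentially the same route as the paper: partition $\hat P_C$ and $\hat P_D$ via the pattern bundles, observe that a peak at $n$ occurs exactly when $\sigma$ ends in an ascent and $\tau_n>n$, count the bundle elements with $\tau_n\le n$ by the size $j$ of the fixed set (the paper isolates this as Lemmas \ref{Happy}, \ref{Happy1}, and \ref{Happy2}), and collapse the parity-restricted tail sums in type $D$ via Pascal's identity to $\Phi(n-1,2k-1)$ or $\Phi(n-1,2k)$. The only cosmetic difference is that you inline the bundle-counting lemmas rather than stating them separately.
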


{ Since the proofs of the type $C$ and $D$ results in Theorem \ref{key} require some specific identities involving the functions 
$\Phi$ and $\Psi$, we present these results and proofs in Subsections \ref{typeCpeakset} and \ref{typeDpeaksets}, respectively.}

{We note that Proposition \ref{prop:compare1} shows that $|\hat{P}_B(\ST;n) |=|\hat{P}_C(S;n)|$.
 Castro-Velez et al. gave a recursive formula for computing the cardinality of the set $\hat{P}_B(\ST;n)$ \cite[Theorem
3.2]{CV14}. Theorem \ref{key} provides an alternate formula for $|\hat{P}_C(S;n)|=|\hat{P}_B(\ST;n) |$ using
 the sums of binomial coefficients $\Phi(n,k)$ and $\Psi(n,k)$, and the cardinalities of sets
$\infierno$ and $\ATK$.
}

\subsection{Peak sets of the Coxeter groups of type \texorpdfstring{$C$}{C}} \label{typeCpeakset}

The following lemma uses the functions $\Phi(n,k)$ and $\Psi(n,k)$ to count the number of elements in $\B_n(\pi)$ having an
ascent
in the $n^{\text{th}}$ position.
This lemma is the key step in the {type $C$} proof of Theorem \ref{key}.

\begin{lemma} \label{Happy}
 If $\pi \in \ATK $ then there are $\Phi(n,k)$ elements $\tau \in \B_n(\pi)$ with $\tau_n \leq n$ and  $\Psi(n,k)$
elements $\tau \in \B_n(\pi)$ with $\tau_n > n$.
\end{lemma}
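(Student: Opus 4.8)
The plan is to read off the structure of $\B_n(\pi)$ directly from Algorithm \ref{algorithmB}. Since $\pi\in\S_n$, the set $I_n=\{\pi_1,\ldots,\pi_n\}$ equals $[n]$, and every $\tau\in\B_n(\pi)$ is produced by choosing a ``fixed'' subset $\Pi\subseteq[n]$, forming $\Pi^c=[n]\setminus\Pi$ and its mirror $\overline{\Pi^c}=\{2n-a+1 : a\in\Pi^c\}$, and then listing the $n$-element set $V:=\Pi\sqcup\overline{\Pi^c}$ in positions $1,\ldots,n$ in the same relative order as $\pi_1\cdots\pi_n$. As observed after Algorithm \ref{algorithmB}, the assignment $\Pi\mapsto\tau$ is injective (one recovers $\Pi=\{\tau_1,\ldots,\tau_n\}\cap[n]$), so the $2^n$ subsets of $[n]$ parametrize all $2^n$ elements of $\B_n(\pi)$.

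Next I would record two elementary facts about $V$. Because $1\le a\le n$ forces $n+1\le 2n-a+1\le 2n$, we have $V\cap[n]=\Pi$ and $V\cap\{n+1,\ldots,2n\}=\overline{\Pi^c}$; in particular every element of $\Pi$ is strictly smaller than every element of $\overline{\Pi^c}$, so the $|\Pi|$ smallest elements of $V$ are precisely those of $\Pi$. Second, since $\{\pi_1,\ldots,\pi_n\}=[n]$, the hypothesis $\pi_n=k$ says exactly that $\pi_n$ has rank $k$ among $\pi_1,\ldots,\pi_n$; as $\tau_1\cdots\tau_n$ has the same relative order as $\pi_1\cdots\pi_n$ and $\{\tau_1,\ldots,\tau_n\}=V$, the entry $\tau_n$ is the $k$-th smallest element of $V$.

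Combining these, $\tau_n\le n$ holds if and only if the $k$-th smallest element of $V$ lies in $V\cap[n]=\Pi$, which by the first fact happens if and only if $|\Pi|\ge k$. Hence the number of $\tau\in\B_n(\pi)$ with $\tau_n\le n$ equals the number of subsets of $[n]$ of size at least $k$, namely
\[ \sum_{j=k}^{n}\binom{n}{j}=\Phi(n,k), \]
and the remaining elements of $\B_n(\pi)$ are exactly those with $\tau_n> n$, of which there are $2^n-\Phi(n,k)=\Psi(n,k)$. I note that only $\pi_n=k$ is used; the condition $\pi_{n-1}<\pi_n$ merely records that $\pi\in\ATK$, the setting in which Lemma \ref{Happy} is later applied. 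There is no deep difficulty in this argument — it is essentially a careful translation of Algorithm \ref{algorithmB} — and the one point warranting attention, the nearest thing to an obstacle, is the injectivity of $\Pi\mapsto\tau$ so that the subset count is exact rather than an overcount; this is immediate from $\Pi=\{\tau_1,\ldots,\tau_n\}\cap[n]$, which also confirms that the $\Phi(n,k)$ elements with $\tau_n\le n$ and the $\Psi(n,k)$ elements with $\tau_n> n$ together exhaust $\B_n(\pi)$.
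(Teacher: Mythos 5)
Your argument is correct and follows essentially the same route as the paper: parametrize $\B_n(\pi)$ by the subsets $\Pi\subseteq[n]$ of fixed entries, observe that $\tau_n$ (being the $k$-th smallest of $\{\tau_1,\ldots,\tau_n\}$) satisfies $\tau_n\le n$ exactly when $|\Pi|\ge k$, and sum $\binom{n}{j}$ over $j\ge k$ to get $\Phi(n,k)$, with $\Psi(n,k)$ left over. Your version is in fact slightly cleaner than the paper's, which loosely says ``$k$-th largest'' where it means $k$-th smallest and leaves the injectivity of $\Pi\mapsto\tau$ to the earlier discussion of Algorithm \ref{algorithmB}.
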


\begin{proof}
Suppose that $\pi = \pi_1 \pi_2 \cdots \pi_n \in \ATK$, so $\pi_{n-1}<\pi_n = k$.
If $\tau = \tau_1 \tau_2 \cdots \tau_n \vert  \tau_{n+1} \tau_{n+2} \cdots \tau_{2n} \in\B_n(\pi)$, then $\tau_n$  is the
$k^\text{th}$ largest integer in the set $\{ \tau_1, \tau_2, \ldots, \tau_n \}$ because $\tau$ has the same relative order as
$\pi$ and $\pi_n = k$.
Therefore if at least $k$ elements of the set  $\{ \tau_1, \tau_2, \ldots, \tau_n \}$ have $\tau_i \leq n$ then we conclude that
$\tau_n \leq n$.

We will show there are $\binom{n}{j}$ elements $\tau \in \B_n(\pi)$ where exactly $j$ elements of the set $\{ \tau_1, \tau_2,
\ldots, \tau_n \}$ satisfy $\tau_n \leq n$.
To construct  such a $\tau$ we start with $\pi = \pi_1 \pi_2 \cdots \pi_n$, and then we choose $j$ elements of the set $\{ \pi_1,
\pi_2, \ldots, \pi_n\}$ to remain fixed. We replace the remaining $n-j$ elements of
  $\{ \pi_1, \pi_2, \ldots, \pi_n\}$ with their mirror images, which are all greater than $n$.  Finally, we list the elements of
the resulting set so that they have the same relative order as $\pi$
and call them $\tau_1\tau_2 \cdots \tau_n$. The subpermutation $\tau_{n+1} \tau_{n+2} \cdots \tau_{2n}$ is then completely
determined by the subpermutation $\tau_1\tau_2\ldots \tau_n$.   Thus there are $\binom{n}{j}$ mirrored permutations $\tau$ of the
form
  $\tau = \tau_1 \tau_2 \cdots \tau_n \vert  \tau_{n+1} \tau_{n+2} \cdots \tau_{2n} \in \B_n(\pi)$ where $j$ of the elements in
$\{ \tau_1, \tau_2, \ldots, \tau_n \}$ satisfy $\tau_i \leq n$.

 Considering all integers $j$ with $k \leq j \leq n$ we see that the number of elements  \[ \tau = \tau_1 \tau_2 \cdots \tau_n
\vert  \tau_{n+1} \tau_{n+2} \cdots \tau_{2n} \in \B_n(\pi)\] with at least $k$ of the elements
 in $\{ \tau_1, \tau_2, \ldots, \tau_n \}$ satisfying $\tau_i \leq n$ is exactly
\[\Phi(n,k)=\displaystyle\sum_{j=k}^n\binom{n}{j}.\]
Thus there are $\Phi(n,k)$ elements $\tau \in \B_n(\pi)$
 with $\tau_n \leq n$.   The other $2^n - \Phi(n,k) = \Psi(n,k)$ elements of $\B_n(\pi)$ must have $\tau_n >n$.
\end{proof}

{With the above result at hand, we now give the following proof.}

\begin{proof}[Proof of Theorem \ref{key} type $C$]
Let $\pi=\pi_1\pi_2\cdots\pi_n\in\Pn$, and recall that $\B_n(\pi)$ is the set of elements of $\B_n$ whose first $n$ entries have the same relative order as $\pi$, and $\rvert\B_n(\pi)\lvert = 2^n$ for any $\pi \in \S_n$.
Let $\tau = \tau_1\tau_2 \cdots \tau_n\vert \tau_{n+1} \cdots \tau_{2n} \in \B_n$ denote a mirrored permutation of type $C_n$, then there are two possibilities:
 \begin{itemize}
 \item either $\tau$ has the same peak set as $\pi$ so that $\tau \in \hat{P}_C(S;n)$, or
 \item $\tau $ has an additional peak at $n$, in which case $\tau \in \hat{P}_C(S\cup\{n\};n)$.
 \end{itemize}

\noindent \textbf{There are two cases in which $\tau \in \hat{P}_C(S;n)$:} \\
\noindent \textbf{Case 1: }  If $\pi$ ends with a descent, i.e., $\pi_{n-1} > \pi_n$, then every $\tau \in \B_n(\pi)$ also has
$\tau_{n-1} >\tau_n$, and thus $\tau$ is in $\hat{P}_C(S;n)$ because it cannot possibly have a peak at $n$ if it ends with a
descent.
We conclude that if $\pi \in \infierno$ then all $2^n$ elements $\tau \in \B_n(\pi)$ are in
$\hat{P}_C(S;n)$.  \\
\noindent \textbf{Case 2:}  If  $\pi$ ends with an ascent, i.e., $\pi_{n-1}< \pi_n$, then $\tau_{n-1} < \tau_n$ for all $\tau \in
 \B_n(\pi)$ as well.
 (Recall that for any $\sigma\in \B_n$ our map into $\S_{2n}$ identifies $\sigma_{i}$ with its \emph{mirror} $\sigma_{n-i+1}$ by
$
\sigma_{n-i+1} = 2n-\sigma_i+1$.)
Hence, if $\tau_n \leq n$, then $\tau_{n+1} = 2n -\tau_n+1 > \tau_n$.
In this case $\tau_{n-1} < \tau_n < \tau_{n+1}$, and $\tau$ does not have a peak at $n$. So $\tau \in \hat{P}_C(S;n)$.
Therefore we conclude that if $\pi \in \cielo $ and if $\tau \in  \B_n(\pi)$ satisfies $\tau_n \leq
n$ then $\tau$ is an
element of $\hat{P}_C(S;n)$.
By Lemma \ref{Happy} we conclude that if $\pi \in \ATK $ then $\Phi(n,k)$ of the elements in $\B_n(\pi)$ are in
$\hat{P}_C(S;n)$.

\noindent \textbf{Case 3:} There is only one case in which $\tau \in \hat{P}_C(S \cup \{n \};n)$.
If $\pi \in \cielo$ and $\tau \in \B_n(\pi)$ is such that $\tau_n >n$, then $\tau$ must satisfy
$\tau_{n-1} <
\tau_n >
\tau_{n+1}$ because $\tau_{n+1} = 2n- \tau_n +1 <n$.
Therefore $\tau$ is an element of $\hat{P}_C(S\cup\{n\};n)$.  Applying Lemma \ref{Happy} we conclude that if $\pi \in P(S;n)^{{\nearrow k}}$ then $\Psi(n,k)$ of the elements in $\B_n(\pi)$ are in $\hat{P}_C(S \cup \{n \};n)$.


From Case 1 and Case 2 we conclude that the cardinality of $\hat{P}_C(S;n)$ is given by the formula
\[\lvert  \hat{P}_C(S;n)  \rvert=  \displaystyle\sum_{k=1}^{n}\lvert  \ATK\rvert  \cdot  \Phi(n,k) + \lvert
\infierno\rvert \cdot 2^n.\]
%
From Case 3 we get
\[\lvert   \hat{P}_C(S\cup\{n\};n) \rvert=\sum_{k=1}^n\lvert   \ATK\rvert \cdot \Psi(n,k).\]
\end{proof}
The following example illustrates the {type $C$} formulas proven in Theorem \ref{key}.
\begin{example}
Using the results of this section we compute the sets $\hat{P}_C(S;3)$, where $S\subset [3]$.
First we note that the group $\S_3$ can be partitioned as $\S_3=P( \emptyset;3 )\sqcup P(\{2\};3 )$, where \[P( \emptyset;3 ) =
\{
123, 321, 213, 312 \}\ \mbox{ and }\ P(\{2\};3 )= \{ 132, 231 \}.\]
To calculate the peak sets in type $C_3$ we will further partition the sets $P(\emptyset;3)$ and $P(\{2\};3 )$ using Definition~\ref{alpha-delta}. Hence
we compute
\[P(\emptyset;3) = {\underline{P(\emptyset;3)}} \sqcup P(\emptyset;3)^{{\nearrow 2}} \sqcup
P(\emptyset;3)^{{\nearrow 3}} ,\] where
${\underline{P(\emptyset;3)}} = \{ 321 \} $,
$P(\emptyset;3)^{{\nearrow 2}} = \{ 312 \}$,
and $P(\emptyset;3)^{{\nearrow 3}} = \{123, 213 \}$.

We also compute the set \[P(\{2\};3) = {\underline{P(\{2\};3)}} = \{231, 132 \}.\]
Of the 48 elements of the Coxeter group $\B_3$, only $2^3 \lvert  P(\emptyset;3)\rvert = 2^3 \cdot 4 = 32$ elements are in
$\hat{P}_C(\emptyset;3) \sqcup \hat{P}_C(\{3\};3)$.
Of these 32 permutations we observe that $18$ lie in $\hat{P}_C(\{3\};3)$ and $14$ lie in $\hat{P}_C(\emptyset;3)$.
We calculate $\lvert  \hat{P}_C(\emptyset;3)\rvert$ using Theorem \ref{key}:
\begin{align*}
 \lvert  \hat{P}_C(\emptyset;3)\rvert  = &  [  \lvert  {\underline{P(\emptyset;3)}} \rvert \cdot  2^3 ] +   [
\lvert
P(\emptyset;3)^{{\nearrow 2}}\rvert\cdot \Phi(3,2) ] +    [\lvert  P(\emptyset;3)^{{\nearrow
3}}\rvert  \cdot \Phi(3,3)]  \\
                                  = &    [1  \cdot 8  ]                            +   [1                       \cdot  4      ] +
[          2  \cdot 1   ]    = 14.
\end{align*}
Hence $\lvert  \hat{P}_C (\{3\};3) \rvert= 2^3 \cdot 4 - 14 = 18$.
Since $P(\{2 \};3) = {\underline{P(\{2 \};3)}}$ we have \[\lvert  \hat{P}_C( \{ 2 \};3) \rvert=  \lvert
{\underline{P(\{2 \};3)}}\rvert
\cdot  2^3   =  \lvert  P(\{2 \};3 )\rvert \cdot 2^3  =  16.\]
    Indeed one may confirm that  ${\hat{P}_C(\{2 \};3)}$ is the union of the following two sets:
\[\B_3 (231) =  \left \{\begin{matrix} & 231\vert 645 & \\ 241\vert 635 & 351 \vert  624 & 362\vert  514 \\ 451\vert  623  & 462
\vert  513 & 356\vert 124  \\ & 564\vert  312&   \end{matrix}  \right  \}  \text{ and }  \B_3 (132) =  \left \{\begin{matrix} &
132\vert 645 & \\ 142\vert 536 & 153 \vert  426 & 263\vert  415 \\ 154\vert  326  & 264 \vert  315 & 365\vert 214  \\ & 465\vert
213&   \end{matrix}  \right  \}  .\]
\end{example}

\subsection{Peak sets of the Coxeter group of type \texorpdfstring{$D$}{D}} \label{typeDpeaksets}
In this section, we use the functions $\Phi(n,k)$ and $\Psi(n,k)$ to describe the cardinalities of $ \hat{P}_D(S;n)$ and $
\hat{P}_D(S\cup\{n\};n)$.
The results depend on the parity of $n$. We begin by providing the following lemmas (similar to Lemma \ref{Happy}) which are used in the {type $D$} proof of Theorem \ref{key}.

\begin{lemma} \label{Happy1}
Let $n$ be even, and let $1 \leq k \leq n/2$.
 If $\pi \in P(S;n)^{\nearrow 2k}\sqcup P(S;n)^{\nearrow 2k-1}$, then there are $\Phi(n-1,2k-1)$ elements $\tau \in \D_n(\pi)$ with $\tau_n
\leq n$ and
 $\Psi(n-1,2k-1)$ elements $\tau \in \D_n(\pi)$ with $\tau_n > n$.
\end{lemma}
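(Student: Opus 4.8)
The plan is to transcribe the proof of Lemma~\ref{Happy} into the type $D$ pattern bundle, tracking parity. Fix $\pi\in P(S;n)^{\nearrow 2k}\sqcup P(S;n)^{\nearrow 2k-1}$, so $\pi_n\in\{2k-1,2k\}$, and let $\tau=\tau_1\cdots\tau_n\vert\tau_{n+1}\cdots\tau_{2n}\in\D_n(\pi)$. Since $n$ is even, the $n$-even branch of Algorithm~\ref{algorithmD} says that $\tau$ is produced by choosing a subset $F\subseteq\{\pi_1,\ldots,\pi_n\}=[n]$ of \emph{even} size $2j$ (with $0\le j\le n/2$) to keep fixed, replacing each remaining entry $v$ by its mirror image $2n+1-v$, and listing the resulting set in the relative order of $\pi$; there are $\binom{n}{2j}$ choices of $F$ for each $j$, and distinct choices give distinct $\tau$. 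Because the fixed entries lie in $[n]$ while each mirror image $2n+1-v$ with $v\in[n]$ lies in $\{n+1,\ldots,2n\}$, the entries of $\{\tau_1,\ldots,\tau_n\}$ that are $\le n$ are exactly the $2j$ fixed ones, and these are precisely the $2j$ smallest elements of $\{\tau_1,\ldots,\tau_n\}$.

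Next I would determine when $\tau_n\le n$. As $\tau_1\cdots\tau_n$ has the same relative order as $\pi$ and $\pi_n\in\{2k-1,2k\}$, the entry $\tau_n$ is the $(2k)$-th smallest element of $\{\tau_1,\ldots,\tau_n\}$ when $\pi_n=2k$, and the $(2k-1)$-th smallest when $\pi_n=2k-1$. By the previous paragraph, this element is $\le n$ exactly when its rank does not exceed $2j$, i.e.\ when $2j\ge 2k$ in the first case and $2j\ge 2k-1$ in the second. Since $2j$ is even, both inequalities are equivalent to $j\ge k$; this parity collapse is precisely what forces the two sets $P(S;n)^{\nearrow 2k}$ and $P(S;n)^{\nearrow 2k-1}$ to be counted together. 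Hence, no matter which of the two sets contains $\pi$, the number of $\tau\in\D_n(\pi)$ with $\tau_n\le n$ equals $\sum_{j=k}^{n/2}\binom{n}{2j}$, the number of choices of $F$ with $\lvert F\rvert=2j\ge 2k$.

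It then remains to identify $\sum_{j=k}^{n/2}\binom{n}{2j}$ with $\Phi(n-1,2k-1)$. Applying Pascal's identity $\binom{n}{2j}=\binom{n-1}{2j-1}+\binom{n-1}{2j}$ to each term and noting that, as $j$ runs over $k,k+1,\ldots,n/2$, the index pairs $2j-1,2j$ sweep out $2k-1,2k,2k+1,\ldots,n-1,n$, each exactly once, the sum becomes $\sum_{i=2k-1}^{n}\binom{n-1}{i}=\sum_{i=2k-1}^{n-1}\binom{n-1}{i}=\Phi(n-1,2k-1)$, using $\binom{n-1}{n}=0$ and Definition~\ref{phipsi}. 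Therefore $\D_n(\pi)$ contains $\Phi(n-1,2k-1)$ elements with $\tau_n\le n$; since $\lvert\D_n(\pi)\rvert=2^{n-1}$, the remaining $2^{n-1}-\Phi(n-1,2k-1)=\Psi(n-1,2k-1)$ elements satisfy $\tau_n>n$, which is the claim.

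The step I expect to require the most care is the parity collapse $2j\ge 2k-1\iff 2j\ge 2k$ together with the bookkeeping in the Pascal sum; once these are pinned down the lemma follows by direct transcription of the argument in Lemma~\ref{Happy}, now carried out inside $\D_n(\pi)$ where fixed sets have even size. One should also check the boundary hypotheses $1\le k\le n/2$ (so that $2k\le n$ and the range $k\le j\le n/2$ is non-empty), but these are exactly the standing assumptions of the statement; and it is worth recording for the proof of Theorem~\ref{key} that $\tau_n>n$ is equivalent to $\tau$ having a peak at position $n$, since $\tau_{n+1}=2n+1-\tau_n$ and $\tau_{n-1}<\tau_n$ here.
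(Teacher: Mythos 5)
Your proposal is correct and follows essentially the same route as the paper's proof: count the elements of $\D_n(\pi)$ with $\tau_n\le n$ by counting the even-size fixed subsets of size $2j\ge 2k$ in Algorithm \ref{algorithmD}, then convert $\sum_{j\ge k}\binom{n}{2j}$ into $\Phi(n-1,2k-1)$ via Pascal's identity. The only (welcome) presentational difference is that you make the parity collapse $2j\ge 2k-1\iff 2j\ge 2k$ explicit and handle the two cases $\pi_n=2k$ and $\pi_n=2k-1$ uniformly, where the paper treats them in sequence.
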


\begin{proof}
Suppose that $\pi = \pi_1 \pi_2 \cdots \pi_n \in P(S;n)^{{\nearrow 2k}}$, so $\pi_n =
2k$ and $\pi_{n-1} = i$ for some integer $i<2k.  $
If $\tau = \tau_1 \tau_2 \cdots \tau_n \vert  \tau_{n+1} \tau_{n+2} \cdots \tau_{2n} \in\D_n(\pi)$, then $\tau_n$  is the
$(2k)^\text{th}$ largest integer in the set
$\{ \tau_1, \tau_2, \ldots, \tau_n \}$ because $\tau$ has the same relative order as $\pi$ and $\pi_n = 2k$.
Therefore if at least $2k$ elements of the set  $\{ \tau_1, \tau_2, \ldots, \tau_n \}$ satisfy $\tau_i \leq n$ then we can
conclude that $\tau_n \leq n$.
Moreover, $\tau_n \leq n$ if and only if $\tau_n < \tau_{n+1} =2n-\tau_n+1$. Thus we wish to count the number of $\tau \in
\D_n(\pi)$ with $\tau_n \leq n$.

In the construction of $\D_n(\pi)$ the total number of $\tau$ with at least $2k$ of the elements from  $\{ \tau_1, \tau_2,
\ldots,
\tau_n \}$
fixed (and  less than or equal to $ n$) is given by the sum
	\begin{equation}\label{eq:nchoosek}
		 \binom{n}{2k}+ \binom{n}{2k+2} + \cdots + \binom{n}{n-2}+ \binom{n}{n},
	\end{equation} when $n$ is even.
Using the identity $\binom{n}{2k} = \binom{n-1}{2k-1} + \binom{n-1}{2k}$ we can see that the quantity in $(\ref{eq:nchoosek})$
equals
	\begin{align*}
		\left [\binom{n-1}{2k-1}+ \binom{n-1}{2k} \right
] +&\left [\binom{n-1}{2k+1}  + \binom{n-1}{2k+2} \right ]+ \cdots + \binom{n-1}{n-1} = \Phi(n-1,2k-1),   \end{align*}
 when $n$ is even.

Suppose that $\pi = \pi_1 \pi_2 \cdots \pi_n \in P(S;n)^{{\nearrow 2k-1}}$, so  $\pi_n
= 2k-1$ and $\pi_{n-1} = i$ for some integer $i<2k-1.  $
If $\tau = \tau_1 \tau_2 \cdots \tau_n \vert  \tau_{n+1} \tau_{n+2} \cdots \tau_{2n} \in\D_n(\pi)$, then $\tau_n$  is the
$(2k-1)^\text{th}$ largest integer in the set
$\{ \tau_1, \tau_2, \ldots, \tau_n \}$ because $\tau$ has the same relative order as $\pi$ and $\pi_n = 2k-1$.
Therefore if at least $2k-1$ elements of the set  $\{ \tau_1, \tau_2, \ldots, \tau_n \}$ satisfy $\tau_i \leq n$ then we can
conclude that $\tau_n \leq n$. Moreover, $\tau_n \leq n$ if and only if $\tau_n < \tau_{n+1} =2n-\tau_n+1$.
So again, the number of elements with $\tau_n \leq n$ is $\Phi(n-1,2k-1)$.

We conclude that when $\pi  \in P(S;n)^{{\nearrow 2k}} \sqcup P(S;n)^{{\nearrow 2k-1}}$ there are
$\Phi(n-1,2k-1)$ mirrored permutations
$\tau \in \D_n(\pi)$ with $\tau_n < \tau_{n+1}$.  Since there are $2^{n-1}$ elements in $\D_n(\pi)$, we see that there are
$\Psi(n-1,2k-1)$ elements  $\tau \in \D_n(\pi)$
with $\tau_n > \tau_{n+1}$.
\end{proof}

\begin{lemma} \label{Happy2}
Let $n$ be odd, and let $1 \leq k \leq \frac{n-1}{2}$.
 If $\pi \in P(S;n)^{2k}$ or $\pi \in P(S;n)^{2k+1}$  then there are $\Phi(n-1,2k)$ elements $\tau \in \D_n(\pi)$ with
$\tau_n \leq n$ and
 $\Psi(n-1,2k)$ elements $\tau \in \D_n(\pi)$ with $\tau_n > n$.
\end{lemma}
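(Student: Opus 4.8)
The plan is to mirror the proof of Lemma \ref{Happy1}, adjusting the parity bookkeeping since $n$ is now odd. First I would fix $\pi \in P(S;n)^{\nearrow 2k+1} \sqcup P(S;n)^{\nearrow 2k}$ and a mirrored permutation $\tau = \tau_1\tau_2\cdots\tau_n\vert\tau_{n+1}\cdots\tau_{2n} \in \D_n(\pi)$. Since $\tau$ has the same relative order as $\pi$, the entry $\tau_n$ is the $\pi_n^{\text{th}}$ largest among $\{\tau_1,\ldots,\tau_n\}$, so $\tau_n \leq n$ (equivalently $\tau_n < \tau_{n+1} = 2n-\tau_n+1$) precisely when at least $\pi_n$ of the entries $\tau_1,\ldots,\tau_n$ are $\leq n$, i.e.\ are among the fixed elements rather than the mirrored ones.

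Next I would count, within the type $D_n$ pattern-bundle construction (Algorithm \ref{algorithmD}, odd case), how many $\tau \in \D_n(\pi)$ have at least $\pi_n$ of their first $n$ entries fixed. Because $n$ is odd, Algorithm \ref{algorithmD} fixes an odd number $2j+1$ of entries, so the relevant count is a sum of binomial coefficients $\binom{n}{\ell}$ over odd $\ell \geq \pi_n$. When $\pi_n = 2k+1$ this is $\binom{n}{2k+1} + \binom{n}{2k+3} + \cdots + \binom{n}{n}$; when $\pi_n = 2k$, since we must fix an odd number of entries and we need at least $2k$ of them, the smallest admissible odd value is $2k+1$, so the count is again $\binom{n}{2k+1} + \binom{n}{2k+3} + \cdots + \binom{n}{n}$. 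Thus both cases give the same total, just as in Lemma \ref{Happy1}.

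I would then apply Pascal's identity $\binom{n}{2k+1} = \binom{n-1}{2k} + \binom{n-1}{2k+1}$ termwise to telescope the sum
\[
\binom{n}{2k+1} + \binom{n}{2k+3} + \cdots + \binom{n}{n}
= \binom{n-1}{2k} + \binom{n-1}{2k+1} + \cdots + \binom{n-1}{n-1} = \Phi(n-1,2k),
\]
using that $n$ is odd so $n-1$ is even and the indices line up to cover every term from $2k$ to $n-1$. Hence there are $\Phi(n-1,2k)$ elements $\tau \in \D_n(\pi)$ with $\tau_n \leq n$, and since $|\D_n(\pi)| = 2^{n-1}$, the remaining $2^{n-1} - \Phi(n-1,2k) = \Psi(n-1,2k)$ elements have $\tau_n > n$.

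The only real subtlety — and the step I would be most careful about — is the parity case analysis for $\pi_n = 2k$: one must verify that requiring an odd number of fixed entries together with ``at least $2k$ fixed'' forces ``at least $2k+1$ fixed'', so that the two sub-cases $\pi_n = 2k$ and $\pi_n = 2k+1$ collapse to the same binomial sum. Once that is pinned down, everything else is the same telescoping argument as in Lemma \ref{Happy1} with the roles of even and odd indices swapped, so the proof is short. I would also note in passing that the constraint $1 \leq k \leq \frac{n-1}{2}$ guarantees $2k+1 \leq n$, so the sums are nonempty and the pattern bundles in question are the ones produced by the odd branch of Algorithm \ref{algorithmD}.
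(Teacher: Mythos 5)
Your proposal is correct and follows essentially the same route as the paper's proof: identify that $\tau_n \leq n$ exactly when at least $\pi_n$ of the first $n$ entries are fixed, observe that the odd-parity constraint of Algorithm \ref{algorithmD} collapses both cases $\pi_n = 2k$ and $\pi_n = 2k+1$ to the sum $\binom{n}{2k+1} + \binom{n}{2k+3} + \cdots + \binom{n}{n}$, and telescope with Pascal's identity to get $\Phi(n-1,2k)$, with the complement count $\Psi(n-1,2k)$. The subtlety you flag (odd fixed count plus ``at least $2k$'' forces ``at least $2k+1$'') is precisely the point the paper relies on as well.
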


\begin{proof}
Suppose that $\pi = \pi_1 \pi_2 \cdots \pi_n \in P(S;n)^{{\nearrow 2k}}$. Then we know by definition  that $\pi_n= 2k$ and $\pi_{n-1} = i$ for some integer $i<2k.  $
If $\tau = \tau_1 \tau_2 \cdots \tau_n \vert  \tau_{n+1} \tau_{n+2} \cdots \tau_{2n} \in\D_n(\pi)$, then $\tau_n$  is the
$(2k)^\text{th}$ largest integer in the set
$\{ \tau_1, \tau_2, \ldots, \tau_n \}$ because $\tau$ has the same relative order as $\pi$ and $\pi_n = 2k$.
Therefore, if at least $2k$ elements of the set  $\{ \tau_1, \tau_2, \ldots, \tau_n \}$ satisfy $\tau_i \leq n$ then we can
conclude that $\tau_n \leq n$.

When $n$ is odd the construction of $\D_n(\pi)$ gives that the total number of $\tau$, where at least $2k$ of the elements from
$\{ \tau_1, \tau_2, \ldots, \tau_n \}$ were
fixed and less than or equal to $n$, is the sum
	\begin{equation}\label{nchoose2k+1}
		 \binom{n}{2k+1}+ \binom{n}{2k+3} + \cdots + \binom{n}{n-2} + \binom{n}{n}.
	 \end{equation}
Using the identity $\binom{n}{2k+1} = \binom{n-1}{2k} + \binom{n-1}{2k+1}$ we can see that the quantity in $(\ref{nchoose2k+1})$
equals
	\[  \left [\binom{n-1}{2k}+ \binom{n-1}{2k+1} \right] +\left [\binom{n-1}{2k+2}  + \binom{n-1}{2k+3} \right ]+ \cdots +   \binom{n-1}{n-1}  = \Phi(n-1,2k),
	\]    when $n$ is odd.

Suppose that $\pi = \pi_1 \pi_2 \cdots \pi_n \in P(S;n)^{{\nearrow 2k+1}}$.
Then by definition we know that $\pi_n = 2k+1$ and $\pi_{n-1} = i$ for some integer $i<2k+1.  $
Let $\tau = \tau_1 \tau_2 \cdots \tau_n \vert  \tau_{n+1} \tau_{n+2} \cdots \tau_{2n} $ be an arbitrary element of $\D_n(\pi)$.
Then $\tau_n$  is the $(2k+1)^\text{th}$ largest integer in the set
$\{ \tau_1, \tau_2, \ldots, \tau_n \}$ because $\tau$ has the same relative order as $\pi$ and $\pi_n = 2k+1$.
Therefore if at least $2k+1$ elements of the set  $\{ \tau_1, \tau_2, \ldots, \tau_n \}$ satisfy $\tau_i \leq n$ then we can
conclude that $\tau_n \leq n$.
Once again when $n$ is odd, the construction of $\D_n(\pi)$ gives that the total number of $\tau$, where at least $2k+1$ of the
elements from  $\{ \tau_1, \tau_2, \ldots, \tau_n \}$  were
fixed and less that or equal to $n$, is the sum \[ \binom{n}{2k+1}+ \binom{n}{2k+3} + \cdots + \binom{n}{n-3} + \binom{n}{n-1} =
\Phi(n-1,2k).\]
Hence the number of elements with $\tau_n \leq n$ is  $\Phi(n-1,2k)$ when $n$ is odd.   It follows that the remaining
$\Psi(n-1,2k)$  elements $\tau$ of $\D_n(\pi)$ will satisfy $\tau_n>n$.
\end{proof}

{We are now ready to enumerate the sets $ \hat{P}_D(S;n) $ and $\hat{P}_D(S\cup\{n\};n)$. }

%

\begin{proof}[Proof of Theorem \ref{key} type $D$]
Let $\pi\in\Pn$.
Assume that  $\tau = \tau_1\tau_2 \cdots \tau_n\vert \tau_{n+1}\tau_{n+2} \cdots \tau_{2n} \in \D_n$, and recall that $\D_n(\pi)$
consists of the elements of $\D_n$ which have the same relative order as $\pi$.
There are $2^{n-1}$ such elements.
Since $\tau\in\D_n(\pi)$, its first $n$ entries $\tau_1\tau_2\cdots\tau_n $ have the same relative order as
$\pi_1\pi_2\cdots\pi_n$, and just as in the type $\B_n$ case, there are two possibilities:
 \begin{itemize}
 \item either $\tau$ has the same peak set as $\pi$ so that $\tau \in \hat{P}_D(S;n)$, or
 \item $\tau$ has an additional peak at $n$, in which case $\tau \in \hat{P}_D(S\cup\{n\};n)$.
 \end{itemize}

\noindent \textbf{The two cases in which $\tau \in \hat{P}_D(S;n)$:} \\
\noindent \textbf{Case 1: }  If $\pi$ ends with a descent, i.e., $\pi_{n-1} > \pi_n$, then every $\tau \in \D_n(\pi)$ also has
$\tau_{n-1} >\tau_n$,
and thus $\tau$ is in $\hat{P}_D(S;n)$ because it cannot possibly have a peak at $n$ if it has a descent at $n-1$.
We conclude that if $\pi \in \infierno$ then all $2^{n-1}$ elements of $\D_n(\pi)$ are in
$\hat{P}_D(S;n)$. \\
\noindent \textbf{Case 2:}  If  $\pi$ ends with an ascent, $\pi_{n-1}< \pi_n$, then $\tau_{n-1} < \tau_n$ for all $\tau \in
\D_n(\pi)$ as well.
 (Recall that for any $\sigma\in \D_n$ our map into $\S_{2n}$, identifies $\sigma_{i}$ with $\sigma_{n-i+1}$ by $ \sigma_{n-i+1}
=
2n-\sigma_i+1$.)
Hence, if $\tau_n \leq n$, then $\tau_{n+1} = 2n -\tau_n+1 > \tau_n$.
In this case $\tau_{n-1} < \tau_n < \tau_{n+1}$, and $\tau$ does not have a peak at $n$. So $\tau \in \hat{P}_D(S;n)$.
Therefore we conclude that if $\pi \in \cielo $ and if $\tau \in  \D_n(\pi)$ satisfies $\tau_n \leq
n$ then $\tau$ is an
element of $\hat{P}_D(S;n)$.
By Lemma \ref{Happy1} we conclude that if $\pi \in \ATK $ then $\Phi(n-1,2k-1)$ of the elements in $\D_n(\pi)$ are in
$\hat{P}_D(S;n)$. \\
\noindent \textbf{The only case in which $\tau \in \hat{P}_D(S \cup \{n \};n)$}: \\
If $\pi \in \cielo$ and $\tau \in \D_n(\pi)$ is such that $\tau_n >n$, then $\tau$ must satisfy $\tau_{n-1} < \tau_n >
\tau_{n+1}$ because $\tau_{n+1} = 2n- \tau_n +1 <n$.
Therefore $\tau$ is an element of $\hat{P}_D(S\cup\{n\};n)$.

We have shown if $\pi$ is in $\infierno$, then all $2^{n-1}$ elements $\D_n(\pi)$ are in $\hat{P}_C(S;n)$.
 Lemma \ref{Happy1} showed when $n$ is even and  $\pi \in P(S;n)^{{\nearrow 2k}}$ or $ \pi \in
P(S;n)^{{\nearrow 2k-1}}$, then $ \Phi(n,2k-1)$ of
the elements of $\D_n(\pi)$ are in $\hat{P}_D(S;n)$.
 Thus we conclude when $n$ is even, the cardinality of $\hat{P}_D(S;n)$ is given by the formula
\[\lvert  \hat{P}_D(S;n)  \rvert=  \displaystyle\sum_{k=1}^{n} \big( \lvert  P(S;n)^{k-1} \rvert +\lvert  P(S;n)^{2k}
\rvert \big)  \cdot  \Phi(n,2k-1) + \lvert  \infierno\rvert\cdot 2^{n-1}.\]
 Lemma \ref{Happy1}  also showed if $\pi \in P(S;n)^{{\nearrow 2k}}$ or $\pi \in P(S;n)^{{\nearrow
2k-1}}$  then $\Psi(n-1,2k-1)$ elements from
$\D_n(\pi)$ are in the set  $\hat{P}_D(S\cup\{n\};n)$, and thus
\[\lvert  \hat{P}_D(S\cup\{n\};n)\rvert=\displaystyle\sum_{k=1}^{\frac{n}{2}}\big(\lvert  P(S;n)^{{\nearrow
2k-1}}\rvert+\lvert
P(S;n)^{{\nearrow 2k}}\rvert\big) \cdot \Psi(n-1,2k-1)\] when $n$ is even.

 Lemma \ref{Happy2} showed when $n$ is odd and  $\pi \in P(S;n)^{{\nearrow 2k}}$ or $ \pi \in
P(S;n)^{{\nearrow 2k-1}}$, then $ \Phi(n-1,2k)$ of the
elements of $\D_n(\pi)$ are in $\hat{P}_D(S;n)$.
 Thus we conclude that when $n$ is odd, the cardinality of $\hat{P}_D(S;n)$ is given by the formula
\begin{align*}  \lvert  \hat{P}_D(S;n)\rvert & =\displaystyle\sum_{k=1}^{ \frac{n-1}{2}} \big(\lvert
P(S;n)^{{\nearrow 2k+1}}\rvert+\lvert
P(S;n)^{{\nearrow 2k}}\rvert\big)\cdot \Phi(n-1,2k)+\lvert  \infierno\rvert \cdot 2^{n-1}.  \end{align*}
 Lemma \ref{Happy2}  also showed if $\pi \in P(S;n)^{{\nearrow 2k}}$ or $\pi \in P(S;n)^{{\nearrow 2k+1}}$  then $\Psi(n-1,2k)$ elements from
$\D_n(\pi)$ are in the set  $\hat{P}_D(S\cup\{n\};n)$, and thus
\[  \lvert  \hat{P}_D(S\cup\{n\};n)\rvert  =\displaystyle\sum_{k=1}^{\frac{n-1}{2}}\big(\lvert  P(S;n)^{{\nearrow 2k+1}}\rvert+\lvert
P(S;n)^{{\nearrow 2k}}\rvert\big)\cdot \Psi(n-1,2k) \] when $n$ is odd. This
proves the formula for the cardinality of $ \hat{P}_D(S\cup\{n\};n) $.
\end{proof}

\subsection{Special case: empty peak set in type $C$ and $D$}\label{sec:emptypeakset}
{In this section we consider the special case of $S = \emptyset$ in types $C_n$ and $D_n$. }

\begin{proposition}\label{cardC}
Let $n\geq 2$ and $m\geq 4$, then
	\begin{enumerate}[(I)]
		\item \label{emptysetC}$|  \hat{P}_{C}(\emptyset;n)|=\frac{3^n+1}{2}$
		\item \label{emptysetD}$ \lvert \hat{P}_D(\emptyset;m)\rvert = \frac{3^{m}}{4}+\frac{(-1)^{m}}{4}+\frac{1}{2}$.
	\end{enumerate}
\end{proposition}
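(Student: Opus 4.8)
The plan is to specialize the formulas of Theorem \ref{key} to $S=\emptyset$ and evaluate the resulting binomial sums. Recall from Lemma \ref{powerof2} that $|P(\emptyset;n)^{{\nearrow k}}|=2^{k-2}$ for $2\le k\le n$ (and it is $0$ when $k=1$, since a permutation cannot end with an ascent to $1$), and from Lemma \ref{emptyalphadelta} that $|\infiernovacio|=1$.

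For part (\ref{emptysetC}), substituting these values into the type $C$ formula of Theorem \ref{key} gives
\[ |\hat{P}_C(\emptyset;n)| = \sum_{k=2}^n 2^{k-2}\,\Phi(n,k) + 2^n .\]
Writing $\Phi(n,k)=\sum_{i=k}^n\binom{n}{i}$ and interchanging the order of summation turns the first term into $\sum_{i=2}^n\binom{n}{i}\big(\sum_{k=2}^i 2^{k-2}\big)=\sum_{i=2}^n\binom{n}{i}(2^{i-1}-1)$. I would then evaluate this using the elementary identities $\sum_{i=0}^n\binom{n}{i}2^i=3^n$ and $\sum_{i=0}^n\binom{n}{i}=2^n$, isolating the $i=0,1$ terms, to obtain $\frac12(3^n+1-2^{n+1})$; adding $2^n$ yields $\frac12(3^n+1)$, as claimed.

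For part (\ref{emptysetD}) I would treat the two parities of $m$ separately using the corresponding type $D$ formulas of Theorem \ref{key}; since $|\infiernovacio|=1$, the trailing term equals $2^{m-1}$ in both cases. After substituting $|P(\emptyset;m)^{{\nearrow j}}|=2^{j-2}$ (and $|P(\emptyset;m)^{{\nearrow 1}}|=0$) and interchanging the order of the double sum over $k$ and $i$, the coefficient of each $\binom{m-1}{i}$ collapses to a geometric sum of powers of $2$, equal to $2^{i-1}-1$ or $2^{i}-1$ according to the parities of $i$ and $m$. Absorbing this alternation into a factor $(-1)^i$ and evaluating with $\sum_{i}\binom{m-1}{i}2^i=3^{m-1}$ and $\sum_{i}\binom{m-1}{i}(-2)^i=(-1)^{m-1}$ (again peeling off the small-index terms), I expect to arrive at $\frac14(3^m+3)$ when $m$ is even and $\frac14(3^m+1)$ when $m$ is odd, and both of these equal $\frac{3^m}{4}+\frac{(-1)^m}{4}+\frac12$.

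The binomial bookkeeping is routine; the main obstacle is organizing the type $D$ computation cleanly — matching each $P(\emptyset;m)^{{\nearrow j}}$ to the correct $\Phi(m-1,\cdot)$, tracking the summation ranges (the vanishing of $P(\emptyset;m)^{{\nearrow 1}}$ removes a term), and handling the floor/ceiling arising in the inner geometric sum so that the parity-dependent pieces reassemble into a single closed form. As a sanity check one can compare the outputs with small cases, e.g.\ $|\hat{P}_C(\emptyset;2)|=5=\frac{3^2+1}{2}$, or cross-reference Proposition \ref{prop:compare1} against the type $B$ count of Castro-Velez et al.
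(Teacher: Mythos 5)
Your proposal is correct, and I verified the computations: in type $C$ the interchange of summation gives $\sum_{i=2}^n\binom{n}{i}(2^{i-1}-1)=\tfrac12(3^n+1)-2^n$, and in type $D$ the coefficient of $\binom{m-1}{i}$ collapses to $\tfrac34\cdot 2^i\mp\tfrac14(-2)^i-1$ (sign depending on the parity of $m$), after which $\sum_i\binom{m-1}{i}(\pm2)^i$ delivers the stated closed forms; the small cases $m=4,5$ check out ($21$ and $61$). However, this is a genuinely different route from the paper's own proof of Proposition \ref{cardC}, which is bijective: the authors map $\hat{P}_C(\emptyset;n)$ onto ternary sequences in the letters $A,B,C$ with an even number of $B$'s (counted by $\tfrac{3^n+1}{2}$), and $\hat{P}_D(\emptyset;m)$ onto those with an even number of $A$'s and of $B$'s, by decomposing each peakless mirrored permutation as $\pi_A\pi_B\pi_C$ and normalizing the length of $\pi_B$ to be even. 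The paper does contain, in the appendix, an alternate proof along your lines --- but only for part (\ref{emptysetC}), and by induction on $n$ via the identity $\sum_{k=2}^{n+1}2^{k-2}\Phi(n,k-1)+2^n=3^n$ (Lemma \ref{curious}) rather than by direct evaluation; the authors explicitly pose the analogous derivation of part (\ref{emptysetD}) from Theorem \ref{key} as an open problem at the end of the appendix, which your parity-weighted binomial computation in fact resolves. The trade-off is the usual one: the bijection explains \emph{why} ternary sequences with parity constraints appear and ties the counts to the cited OEIS entries, while your summation argument is mechanical, treats both types uniformly, and avoids inventing the auxiliary identity. When writing it up, do make explicit the one bookkeeping point your sketch glosses over: in the type $D$ interchange, the coefficient of $\binom{m-1}{i}$ is $\sum_{j\le J}|P(\emptyset;m)^{\nearrow j}|$ with cutoff $J=i$ or $J=i-1$ according to whether the parities of $i$ and the formula's index pattern align, and the vanishing of $|P(\emptyset;m)^{\nearrow 1}|$ is what makes the $i=1$ term behave correctly in both parities.
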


{Proposition \ref{cardC} \eqref{emptysetC} was originally proved by Castro-Velez et al. \cite[Theorem 2.4]{CV14} in type $B_n$. It can also be proved as a corollary to Theorem \ref{key} as we show in the appendix.
However the proof given here is a combinatorial argument involving ternary sequences (in the letters $A,B$ and $C$) with
an even number of $B$'s that restricts naturally to a proof of a similar result involving the mirrored permutations with no peaks
in type $D_n$ as well.  }

The integer sequence given by Theorem \ref{cardC} \eqref{emptysetC} is sequence \textcolor{blue}{\href{https://oeis.org/A007051}{A007051}} in
Sloane's OEIS after the first three iterations \cite[A007051]{OEIS}.
Let $\TB_n$ denote the set of ternary sequences (in the letters $A,B$ and $C$) of length $n$ with an even number of $B$'s.
It is noted on Sloane's OEIS that $\frac{3^n+1}{2}$ counts all such sequences.

\begin{proof}[Proof of Proposition \ref{cardC} \eqref{emptysetC}]
To prove that $\lvert \hat{P}_C(\emptyset;n) \rvert = \lvert \TB_n \rvert = \frac{3^n+1}{2}$ we prove there is a bijection
between
the sets $\TB_n$ and $ \hat{P}_C(\emptyset;n)$.

Every permutation $\pi \in \hat{P}_C(\emptyset; n)$ has the following form
$\pi=\pi_A \pi_B \pi_C|\overline{\pi_C\pi_B\pi_A}$ where $\pi_A$ is a sequence of numbers in descending order and each $\pi_i \in
\pi_A$ is greater than $n$,
$\pi_B$ is a sequence of numbers in descending order and each $\pi_i \in \pi_B$ is less than or equal to $n$, and $\pi_C$ is a
sequence of numbers in ascending order and
each $\pi_i \in \pi_C $ is less
than or equal to $n$. Note that the mirror image $\overline{\pi_C\pi_B\pi_A}$ is determined uniquely by $\pi_A\pi_B\pi_C$, so to
condense notation in this proof we will refrain from writing it.
It is possible for at most two of the parts $\pi_A, \pi_B$, or $\pi_C$ to be empty. Moreover, there is always a choice of whether
to include the minimum element of the subpermutation
$\pi_B\pi_C$ as the last element in $\pi_B$ or the first element in $\pi_C$.
We always choose to make the length of $\pi_B$ even by including/excluding this minimum element depending on the parity of
$\pi_B$.

More precisely, let  $\pi = \pi_A \pi_B \pi_C \in \hat{P}_C(\emptyset; n)$ where \[ \pi_A=[\pi_1>\cdots > \pi_k],
\pi_B=[\pi_{k+1}>\cdots > \pi_{k+j}] \text{ and } \pi_C=[\pi_{k+j+1}<\cdots < \pi_n].\]

Define a set map $\Delta: \hat{P}_C(\emptyset; n) \rightarrow \TB_n$
by assigning a ternary sequence $\Delta(\pi) = x$ in $\TB_n$ to each element $\pi \in  \hat{P}_C(\emptyset; n)$ by setting
\[  \Delta(\pi)_i= x_i = \begin{cases}  A & \text{if } i \in \{2n-\pi_1+1, \ldots, 2n-\pi_k+1\} \\ B & \text{if } i \in
\{\pi_{k+1},\ldots, \pi_{k+j}\} \  \\C & \text{if } i \in \{ \pi_{k +j+1}, \ldots, \pi_{n}\}. \end{cases} \]
Note that there is an even number of $B'$s by the way we defined $\pi_B$. Hence $\Delta(\pi) =x \in \TB_n$.

We can also define a set map $\Theta : \TB_n \rightarrow \hat{P}_C(\emptyset; n) $ by reversing this process.
That is to say, given a ternary sequence $x=x_1x_2\cdots x_n$ in $\TB_n$ define $\mathcal{A}, \mathcal{B},$ and $\mathcal{C}$ to
be following three sets:
 \[ \mathcal{A}  =   \{ 1 \leq i \leq n: x_i = A \}, \ \ \mathcal{B} =  \{ 1 \leq i \leq n : x_i=B \} ,    \text{ and } \
\mathcal{C} =   \{ 1 \leq i \leq n: x_i = C \} . \]
List the elements of $\mathcal{A}$ and $\mathcal{C}$ in ascending order and $\mathcal{B}$ in descending order:
  \[  \mathcal{A} =  [  a_1 < a_2 < \cdots < a_k  ], \ \ \mathcal{B} = [ b_{k+1} >  b_{k+2} > \cdots >  b_{k+j}  ] , \text{ and }
  \mathcal{C}= [ c_{k+j+1} < c_{k+j+2} < \cdots < c_{n} ].  \]
Then define $\Theta(x) = \pi$ where \[ \pi_i = \begin{cases} 2n-a_i+1 & \text{if } 1 \leq i \leq k \\ b_{i} & \text{if }  k+1
\leq
i \leq k+j \\ c_{i} & \text{if } k+j+1 \leq i \leq n . \end{cases}  \]
 Notice that after $\pi_i$ is determined for $1\leq i \leq n$ then the rest of $\pi$ is determined.

To show $\Theta \circ \Delta = Id$ let $\pi=\pi_A\pi_B\pi_C \in \hat{P}_C(\emptyset; n)$ where
	\[ \pi_A=[\pi_1>\cdots > \pi_k], \pi_B=[\pi_{k+1}>\ldots > \pi_{k+j}] \text{ and } \pi_C=[\pi_{k+j+1}<\cdots < \pi_n],\]
and set  $\sigma=\Theta(\Delta(\pi))=\sigma_1\cdots \sigma_n$. Then $\Delta(\pi)_i=x_i=A$ for $i \in \{ 2n-\pi_1+1, \cdots,
2n-\pi_k+1\}$, so the list $\mathcal{A}=[2n-\pi_1+1 <\cdots< 2n-\pi_k+1]$.
By definition of $\Theta$ we get $\sigma_i = 2n-(2n-\pi_i+1)+1$ for $1\leq i \leq k$, thus $\sigma_i=\pi_i$ for $1\leq i \leq k$.

Similarly, $\Delta(\pi)_i=x_i=B$ for $i \in \{\pi_{k+1},\cdots, \pi_{k+j}\}$, thus $\mathcal{B}=[\pi_{k+1}>\ldots>\pi_{k+j}]$. By
definition of $\Theta$ we get $\sigma_i=\pi_i$ for $k+1 \leq i \leq k+j$.
Finally, $\Delta(\pi)_i=x_i=C$ for $i\in \{\pi_{k+j+1}, \ldots, \pi_{n}\}$, thus $\mathcal{C}=[\pi_{k+j+1}<\ldots<\pi_{n}]$.
By definition of $\Theta$ we see that $\sigma_i=\pi_i$ for $k+j+1 \leq i\leq n$. Therefore $\sigma_i = \pi_i$ for $1\leq i \leq
n$, which implies $\Theta(\Delta(\pi))=\sigma=\pi$ for all $\pi \in \hat{P}_C(\emptyset; n)$. A similar argument shows
$\Delta(\Theta(x))=x$ for all $x \in T_n$.
 \end{proof}


The integer sequence given by Proposition \ref{cardC} \eqref{emptysetD} is sequence \textcolor{blue}{\href{http://oeis.org/A122983}{A122983}}
in Sloane's OEIS after the first three iterations \cite[A122983]{OEIS}.
To prove this result we let $T_n$ denote the set of ternary sequences (in the letters $A,B$ and $C$) of length $n$ with an even
number of $A$'s and $B$'s.
It is noted on Sloane's OEIS that $\frac{3^{n}}{4}+\frac{(-1)^{n}}{4}+\frac{1}{2}$ counts all such sequences.
In the following proof we construct a bijection from $T_n$ to $\hat{P}_D(\emptyset;n)$ by using the maps $\Delta$ and $\Theta$,
similar to the proof of Proposition \ref{cardC}  \eqref{emptysetC}.

\begin{proof}[Proof of Proposition \ref{cardC} \eqref{emptysetD}]
 Every permutation $\pi \in \hat{P}_D(\emptyset; n)$ has the following form
$\pi=\pi_A \pi_B \pi_C|\overline{\pi_{C}}\overline{\pi_{B}}\overline{\pi_A}$ where $\pi_A$ is a sequence of numbers in descending
order and each $\pi_i \in \pi_A$ is greater than $n$,
 $\pi_B$ is a sequence of numbers in descending order and each $\pi_i \in \pi_B$ is less than or equal to $n$, and $\pi_C$ is a
sequence of numbers in ascending order and
each $\pi_i \in \pi_C $ is less
than or equal to $n$.  Since the mirror image $\overline{\pi_C\pi_B\pi_A}$ is determined by $\pi_A\pi_B\pi_C$ we will simply write
$\pi=\pi_A\pi_B\pi_C$ in the rest of this proof.
Note that it is possible for at most two of the parts $\pi_A,\pi_B,$ or $\pi_C$ to be empty.

The length of $\pi_A$ is even since every element $\pi$ in $\hat{P}_D(\emptyset; n)$ has an even number of entries in $\pi_1 \pi_2
\cdots \pi_n$ that are greater than $n$. Moreover, there is always a choice of whether to include the minimum element of the
subpermutation
$\pi_B\pi_C$ as the last element in $\pi_B$ or the first element in $\pi_C$.
We always choose to make the length of $\pi_B$ even by including/excluding this minimum element depending on the parity of
$\pi_B$. More precisely, let
$\pi = \pi_A \pi_B \pi_C \in \hat{P}_D(\emptyset; n)$ where \[ \pi_A=[\pi_1>\cdots > \pi_k], \pi_B=[\pi_{k+1}>\cdots > \pi_{k+j}]
\text{ and } \pi_C=[\pi_{k+j+1}<\cdots < \pi_n].\]

Define a set map $\Delta: \hat{P}_D(\emptyset; n) \rightarrow \TB_n$ by assigning a ternary sequence $\Delta(\pi) = x$ in $\TB_n$
to each element $\pi \in  \hat{P}_D(\emptyset; n)$ by setting
\[  \Delta(\pi)_i= x_i = \begin{cases}  A & \text{if } i \in \{2n-\pi_1+1, \ldots, 2n-\pi_k+1\} \\ B & \text{if } i \in
\{\pi_{k+1},\ldots, \pi_{k+j}\} \  \\C & \text{if } i \in \{ \pi_{k +j+1}, \ldots, \pi_{n}\}. \end{cases} \]
Note that there is an even number of $A$'s and $B$'s by the way we defined $\pi_A$ and $\pi_B$. Hence $\Delta(\pi) =x \in \TB_n$.

We can also define a set map $\Theta : \TB_n \rightarrow \hat{P}_D(\emptyset; n) $ by reversing this process.
That is to say, given a ternary sequence $x=x_1x_2\cdots x_n$ in $\TB_n$ define $\mathcal{A}, \mathcal{B},$ and $\mathcal{C}$ to
be following three sets:
 \[ \mathcal{A}  =   \{ 1 \leq i \leq n: x_i = A \}, \ \ \mathcal{B} =  \{ 1 \leq i \leq n : x_i=B \} , \ \   \text{ and } \
\mathcal{C} =   \{ 1 \leq i \leq n: x_i = C \} . \]
List the elements of $\mathcal{A}$ and $\mathcal{C}$ in ascending order and $\mathcal{B}$ in descending order as follows:
  \[  \mathcal{A} =  [  a_1 < a_2 < \cdots < a_k  ], \ \  \mathcal{B} = [ b_{k+1} >  b_{k+2} > \cdots >  b_{k+j}  ] , \text{ and }
\] \[   \mathcal{C}= [ c_{k+j+1} < c_{k+j+2} < \cdots < c_{n} ].  \]
Then define $\Theta(x) = \pi$ where \[ \pi_i = \begin{cases} 2n-a_i+1 & \text{ if } 1 \leq i \leq k \\ b_{i} & \text{ if }  k+1
\leq i \leq k+j \\ c_{i} & \text{ if } k+j+1 \leq i \leq n .\end{cases}  \]
 Notice that after $\pi_i$ is determined for $1\leq i \leq n$ then the rest of $\pi$ is determined.

To show $\Theta \circ \Delta = Id$ let $\pi=\pi_A\pi_B\pi_C \in \hat{P}_D(\emptyset; n)$ where
\[ \pi_A=[\pi_1>\cdots > \pi_k], \pi_B=[\pi_{k+1}>\ldots > \pi_{k+j}] \text{ and } \pi_C=[\pi_{k+j+1}<\cdots < \pi_n],\]
and set  $\sigma=\Theta(\Delta(\pi))=\sigma_1\cdots \sigma_n$. Then $\Delta(\pi)_i=x_i=A$ for $i \in \{ 2n-\pi_1+1, \cdots,
2n-\pi_k+1\}$, so the list $\mathcal{A}=[2n-\pi_1+1 <\cdots< 2n-\pi_k+1]$.
By the definition of $\Theta$ we get $\sigma_i = 2n-(2n-\pi_i+1)+1$ for $1\leq i \leq k$, thus $\sigma_i=\pi_i$ for $1\leq i \leq
k$.

Similarly, $\Delta(\pi)_i=x_i=B$ for $i \in \{\pi_{k+1},\cdots, \pi_{k+j}\}$, and thus $\mathcal{B}=[\pi_{k+1}>\ldots>\pi_{k+j}]$.
By definition of $\Theta$ we see that $\sigma_i=\pi_i$ for $k+1 \leq i \leq k+j$. Finally, $\Delta(\pi)_i=x_i=C$ for $i\in
\{\pi_{i+k+1}, \ldots, \pi_{n}\}$, thus $\mathcal{C}=[\pi_{k+j+1}<\ldots<\pi_{n}]$. By definition of $\Theta$ we get
$\sigma_i=\pi_i$ for $k+j+1 \leq i\leq n$. Therefore $\sigma_i = \pi_i$ for $1\leq i \leq n$, which implies
$\Theta(\Delta(\pi))=\sigma=\pi$ for all $\pi \in \hat{P}_D(\emptyset; n)$. A similar argument shows $\Delta(\Theta(x))=x$ for all
$x \in T_n$.
\end{proof}

We will illustrate the bijection between $\Delta$ and $\Theta$, described in the proof of Proposition \ref{cardC}, with the following example.
\begin{example}
\underline{Type $C$}:
Consider the permutation $\pi \in \B_{10}$ where
	\[\pi = 20 \ 18 \ 13 \ 10 \ 9 \ 7 \ 4 \ 2 \  5 \ 6 \ | \ 15 \ 16 \ 19 \ 17 \  14 \ 12 \ 11 \ 8 \ 3 \ 1.\]
Let $\Delta(\pi)=x \in T_n$. Since
\[ \pi_A = 20 \ 18 \ 13  , \ \  \pi_B = 10 \ 9 \ 7 \ 4  , \text{ and } \pi_C  = 2 \ 5 \ 6,\]
then $x_i=A$ for $i \in \{1,3,8\}$, $x_i = B$ for $i\in \{4,7,9, 10\}$, and $x_i=C$ for $i \in \{2, 5,6\}$. Thus  $ \Delta(\pi) =
x = ACABCCBABB$.

Consider $ \Theta(\Delta(\pi))\in \hat{P}_C(\emptyset;10)$. Since $ \Delta(\pi) = x = ACABCCBABB$ then the lists $\mathcal{A},
\mathcal{B}$ and $\mathcal{C}$ are defined as
 \[ \mathcal{A}=[1<3<8], \ \ \mathcal{B} = [ 10>9 > 7 > 4   ], \text{ and } \mathcal{C} = [ 2<5< 6 ].\]
 Using the definition of $\Theta$ we get
 \[ \Theta(\Delta(\pi))=\Theta(x) = 20 \ 18 \ 13 \ 10 \ 9 \ 7 \ 4 \ 2 \  5 \ 6 \ | \ 15 \ 16 \ 19 \ 17 \  14 \ 12 \ 11 \ 8 \ 3 \
1= \pi.\qedhere\]

\noindent
\underline{Type $D$}:
Consider the permutation $\pi \in \D_{10}$ where
	\[\pi = 20 \ 18 \ 13 \ 11 \ 9 \ 7 \ 4 \ 2 \  5 \ 6 \ | \ 15 \ 16 \ 19 \ 17 \  14 \ 12 \ 10 \ 8 \ 3 \ 1.\]
Let $\Delta(\pi)=x \in T_n$. Since
\[ \pi_A = 20 \ 18 \ 13 \ 11 , \ \  \pi_B = 9 \ 7 \ 4 \ 2 , \text{ and } \pi_C  = 5 \ 6,\]
then $x_i=A$ for $i \in \{1,3,8,10\}$, $x_i = B$ for $i\in \{2,4,7,9\}$, and $x_i=C$ for $i \in \{5,6\}$. Thus  \[ \Delta(\pi) =
x
= ABABCCBABA. \]
Consider $ \Theta(\Delta(\pi))\in \hat{P}_D(\emptyset;10)$. Since $ \Delta(\pi) = x = ABABCCBABA$ then the lists $\mathcal{A},
\mathcal{B}$ and $\mathcal{C}$ are defined as
 \[ \mathcal{A}=[1<3<8<10], \ \ \mathcal{B} = [ 9 > 7 > 4 > 2  ], \text{ and } \mathcal{C} = [ 5< 6 ].\]
 Using the definition of $\Theta$ we get
 \[ \Theta(\Delta(\pi))=\Theta(x) = 20 \ 18 \ 13 \ 11 \ 9 \ 7 \ 4 \ 2 \  5 \ 6 \ | \ 15 \ 16 \ 19 \ 17 \  14 \ 12 \ 10 \ 8 \ 3 \
1= \pi. \]
\end{example}

\section{Questions and future work}

We end this paper with a few questions of interest.
We suspect that the sets we call pattern bundles have appeared elsewhere in the literature on Coxeter groups,
but we do not know of such a reference.   (Note that the pattern bundles are the fibers of an order-preserving flattening map from
$\B_n$ to $\S_n$
 that differs from the usual $2^n$ to $1$ projection of signed permutations to $\S_n$ which forgets the negative signs.)
If these sets have not been studied before, then our first question is:

\begin{problem}
Can the pattern bundles of types $C_n$ and $D_n$ be used to study other permutation statistics (such as descent sets for
instance)?
\end{problem}

We can also ask whether these techniques can be applied to study other groups having \emph{suitably nice} embeddings into $\S_N$,
and whether the peak set of the image encodes any information about the embedded group.
\begin{problem}
Can the methods used in this paper be applied to study peak sets of groups such as the dihedral groups or Coxeter groups of
exceptional type by embedding them into $\S_{N}$ for some $N$?
\end{problem}

We provide recursive formulas for the quantities $|\hat{P}_C(S;n)|$ and  $|\hat{P}_D(S;n)|$ in Theorem \ref{key} that can be used to find closed formulas
for any particular choice of peak set $S$.  Several of the special cases we consider in this paper give closed formulas for
integer sequences appearing on \emph{Sloane's Online Encyclopedia of Integer Sequences (OEIS)} \cite{OEIS}.
Hence we believe the following would be an interesting undergraduate student research project.
\begin{problem}[Undergraduate Student Research Project]
Can one compute closed formulas for some families of peak sets and analyze which of these appear on the OEIS?
\end{problem}
This leads us to our final question:
\begin{problem}
   Can one discover closed combinatorial formulas for $|\hat{P}_C(S;n)|$ and  $|\hat{P}_D(S;n)|$ in general?
\end{problem}

\section{Acknowledgements}
The authors would like to thank the \emph{Underrepresented Students in Topology and Algebra Symposium} (USTARS);
if not for our chance encounter at USTARS this collaboration may not have materialized! We also thank Sara Billey, Christophe
Hohlweg, and Bruce Sagan for helpful conversations about this paper.
Pamela E. Harris gratefully acknowledges travel support from the Photonics Research Center and the Mathematical Sciences Center of
Excellence at the United States Military Academy.

\appendix
\section{Alternate proofs}
In this appendix we provide an alternate proof of Theorem \ref{cardC} \eqref{emptysetC} by proving it as a corollary to Theorem \ref{key}.  Our
alternate proof will use  the following curious identity.
\begin{lemma}\label{curious}If $n\geq 2,$ then $\displaystyle\sum_{k=2}^{n+1}2^{k-2}\Phi(n,k-1)+2^n=3^n.$
\end{lemma}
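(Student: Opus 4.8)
The plan is to reduce the identity to the binomial theorem by reversing the order of summation. First I would re-index: setting $j = k-1$, the sum $\sum_{k=2}^{n+1}2^{k-2}\Phi(n,k-1)$ becomes $\sum_{j=1}^{n}2^{j-1}\Phi(n,j)$, so the claim is equivalent to
\[
\sum_{j=1}^{n}2^{j-1}\Phi(n,j) + 2^n = 3^n.
\]
Next I would expand $\Phi(n,j)=\sum_{i=j}^{n}\binom{n}{i}$ (Definition \ref{phipsi}) and swap the two summations. Since the pairs $(i,j)$ occurring satisfy $1\le j\le i\le n$, interchanging yields
\[
\sum_{j=1}^{n}2^{j-1}\Phi(n,j)=\sum_{i=1}^{n}\binom{n}{i}\sum_{j=1}^{i}2^{j-1}=\sum_{i=1}^{n}\binom{n}{i}\bigl(2^{i}-1\bigr),
\]
using the geometric sum $\sum_{j=1}^{i}2^{j-1}=2^{i}-1$.

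It then remains to evaluate $\sum_{i=1}^{n}\binom{n}{i}2^{i}-\sum_{i=1}^{n}\binom{n}{i}$. By the binomial theorem $\sum_{i=0}^{n}\binom{n}{i}2^{i}=(1+2)^n=3^n$ and $\sum_{i=0}^{n}\binom{n}{i}=2^n$, so after removing the $i=0$ terms these partial sums equal $3^n-1$ and $2^n-1$ respectively. Adding the leftover $2^n$ from the statement gives
\[
\bigl(3^n-1\bigr)-\bigl(2^n-1\bigr)+2^n=3^n,
\]
which is the desired equality.

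There is no genuinely hard step here; the only thing requiring care is the bookkeeping, namely the index shift $j=k-1$ and correctly tracking the triangular range $1\le j\le i\le n$ when the order of summation is exchanged. The hypothesis $n\ge 2$ is needed only to guarantee the sum is nonempty and all index ranges are sensible.
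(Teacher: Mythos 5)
Your proof is correct, and it takes a genuinely different route from the paper. The paper proves Lemma \ref{curious} by induction on $n$, using the base case $n=2$ together with the recurrence $\Phi(n+1,k-1)=\Phi(n,k-1)+\Phi(n,k-2)$ (itself a consequence of Pascal's rule applied termwise to the partial row sums) to push the identity from $n$ to $n+1$. You instead give a direct, closed-form evaluation: unfold $\Phi(n,j)$ as a sum of binomial coefficients, interchange the order of summation over the triangular range $1\le j\le i\le n$, collapse the inner geometric sum to $2^i-1$, and finish with the binomial theorem in the form $\sum_i \binom{n}{i}2^i=3^n$. Your computation checks out at every step (the reindexing $j=k-1$, the swap, and the final bookkeeping $(3^n-1)-(2^n-1)+2^n=3^n$ are all right). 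What each approach buys: the paper's induction is short given that it already manipulates $\Phi$ via Pascal-type identities elsewhere, and it matches the inductive flavor of the rest of Section \ref{typeCD}; your argument is arguably more illuminating, since it makes transparent that the $3^n$ is just $(1+2)^n$ and requires no guessing of the closed form in advance. One tiny quibble: the hypothesis $n\ge 2$ is not actually needed for your argument (the identity already holds at $n=1$, and vacuously at $n=0$); the paper assumes it only because that is the range in which the lemma is applied.
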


\begin{proof}We proceed by induction.
Observe that when $n=2$, we have that
\[\displaystyle\sum_{k=2}^{3}2^{k-2}\Phi(2,k-1)+4=2^0\Phi(2,1)+2^1\Phi(2,2)+4=3+2+4=9=3^2.\]
We assume that $\displaystyle\sum_{k=2}^{n+1}2^{k-2}\Phi(n,k-1)+2^n=3^n.$

Observe that using the identity $\Phi(n+1,k-1)=\Phi(n,k-1)+\Phi(n,k-2)$ and our induction hypothesis we have
\begin{align*}
\displaystyle\sum_{k=2}^{n+2}2^{k-2}\Phi(n+1,k-1)+2^{n+1}&=\displaystyle\sum_{k=2}^{n+2}2^{k-2}\Phi(n,k-1)+\displaystyle\sum_{k=2}
^{n+2}2^{k-2}\Phi(n,k-2)+2^{n+1}\\
&=\displaystyle\sum_{k=2}^{n+1}2^{k-2}\Phi(n,k-1)+2\displaystyle\sum_{k=1}^{n+1}2^{k-2}\Phi(n,k-1)+2^{n+1}\\
&=3\displaystyle\sum_{k=2}^{n+1}2^{k-2}\Phi(n,k-1)+2^n+2^{n+1}\\
&=3^{n+1}.\end{align*}
\end{proof}

\noindent Now that we have established Lemma \ref{curious} we can prove Theorem \ref{cardC} \eqref{emptysetC} as a corollary of Theorem \ref{key}.
\begin{proof}[Alternate proof of Theorem \ref{cardC}]
We proceed by induction on $n$. If $n=2$ we have previously computed that $\lvert  \hat{P}_C(\emptyset,2)\rvert=5$, which is the
same as $\frac{3^2+1}{2}=5$.
Assume that for any $k\leq n$ $\lvert  \hat{P}_C(\emptyset,k)\rvert=\frac{3^k+1}{2}$. We want to show that the formula holds
for $k=n+1$. To do so we recall that by Theorem \ref{key} and Lemma \ref{powerof2} we know that
\[\lvert  \hat{P}_C(\emptyset;n)\rvert=\displaystyle\sum_{k=1}^{n}2^{k-2}\Phi(n,k)+2^n.\]
Hence
\begin{align*}
\lvert  \hat{P}_C(\emptyset;n+1)\rvert&=\displaystyle\sum_{k=1}^{n+1}2^{k-2}\Phi(n+1,k)+2^{n+1}\\
&=2^{n-1}+2^{n+1}+\displaystyle\sum_{k=1}^{n}2^{k-2}(\Phi(n,k-1)+\Phi(n,k))\\
&=2^{n-1}+2^{n}+\left(\frac{3^n+1}{2}\right)+\displaystyle\sum_{k=1}^{n}2^{k-2}\Phi(n,k-1).
\end{align*}
Observe that using Lemma
\ref{curious}\[\displaystyle\sum_{k=1}^{n}2^{k-2}\Phi(n,k-1)+2^{n-1}+2^{n}=\displaystyle\sum_{k=1}^{n+1}2^{k-2}\Phi(n,
k-1)+2^n=3^n.\]
Therefore
\[\lvert  \hat{P}_C(\emptyset;n+1)\rvert=\frac{3^n+1}{2}+3^n=\frac{3^{n+1}}{2}.\]
\end{proof}

We end this appendix with one more problem to consider.

\begin{problem}
Can one provide an analogous proof of Theorem Theorem \ref{cardC} \eqref{emptysetD} following a similar argument as that of the alternate proof of
Theorem \ref{cardC} in the appendix?
This does not seem to be a difficult problem, but we were unable to provide an analogous proof.
\end{problem}

\end{document}